\newtheorem*{thm*}{Theorem}
\newtheorem{thm}{Theorem}[section]
\newtheorem{lem}[thm]{Lemma}
\newtheorem*{conj*}{Conjecture}
\theoremstyle{definition}
\theoremstyle{remark}
\DeclareMathOperator{\Span}{span}
\DeclareMathOperator{\End}{End}
\title{Combinatorics of Vogan diagrams for almost-K\"ahler manifolds}
\author{Alice Gatti}
\date{}
\begin{document}
\newcommand{\lrar}[1]{\langle #1 \rangle}
\newcommand{\lrbr}[1]{\lbrace #1 \rbrace}
\newcommand{\lrbrabs}[1]{\lvert #1\rvert}
\newcommand{\ZZ}{\mathbb{Z}}
\newcommand{\RR}{\mathbb{R}}
\newcommand{\NN}{\mathbb{N}} 
\newcommand{\CC}{\mathbb{C}}
\newcommand{\HH}{\mathbb{H}}
\newcommand{\CP}{\mathbb{CP}}
\newcommand{\OO}{\mathbb{O}}
\newcommand{\dime}{\mathrm{dim}}
\newcommand{\tr}{\mathrm{tr}}
\newcommand{\de}{\partial}
\newcommand{\lie}{\mathcal{L}}
\newcommand{\dde}[1]{\frac{\partial}{\partial#1}}
\newcommand{\JComp}{\mathcal{J}}
\newcommand{\lgg}{\mathfrak{g}}
\newcommand{\mm}{\mathfrak{m}}
\newcommand{\kk}{\mathfrak{k}}
\newcommand{\ad}{\mathrm{ad}}
\newcommand{\lggc}{\mathfrak{g}_{\CC}}
\newcommand{\pp}{\mathfrak{p}}
\newcommand{\hh}{\mathfrak{h}}
\newcommand{\ep}[1]{\varepsilon_{#1}}
\newcommand{\spann}{\mathrm{span}}
\newcommand{\ttt}{\mathfrak{t}}
\newcommand{\aaa}{\mathfrak{a}}
\newcommand{\dd}{\mathrm{d}}

\allowdisplaybreaks

\maketitle

\begin{abstract}
Let $G$ be a non-compact classical semisimple Lie group and let $G/V$ be the adjoint orbit with respect to a fixed element in $G$. These manifolds can be equipped with an almost-K\"ahler structure and we provide explicit formulae for the existence of special almost-complex structures on $G/V$ purely in terms of the combinatorics of the associated Vogan diagram. The formulae are given separately for Lie groups whose Lie algebras are of type $A_{\ell}$, $B_{\ell}$, $C_{\ell}$, $D_{\ell}$, where $\ell$ denotes the rank of the Lie algebra.
\end{abstract}


\section{Introduction}

The aim of this paper is to highlight the combinatorial structure of a problem coming from symplectic geometry, that is, finding almost-complex structures on a given symplectic manifold satisfying a geometric and differential property. We will restrict to a particular class of symplectic manifolds that are called \emph{adjoint orbits}, that are diffeomorphic to a quotient $G/V$ where $G$ is a semisimple Lie group and $V$ a compact subgroup. On these manifolds one can choose a symplectic structure and an almost-complex structure that are \emph{invariant} by the group action of $G$ on $G/V$, meaning that these structures become algebraic objects on the Lie algebra $\mathfrak{g}$ of $G$. By the assumption of $G$ being semisimple, the symplectic and almost-complex structures can be studied in a purely Lie-theoretical way. In turn, also the differential properties of interest concerning the almost-complex structure can be formulated in a Lie theory framework. This is the setting studied in \cite{DellaVedovaGatti2022}, where the authors formulated the condition in terms of the root system of the underlying Lie algebra $\mathfrak{g}$. In this work, we take a step further and we rephrase the condition solely in terms of the combinatorics of the Vogan diagram of $\mathfrak{g}$.

The problem is explained with more details in the following. Let $(M,\omega)$ be a symplectic manifold, and let $J$ be a compatible almost-complex structure on it. This means that $J$ is an endomorphism of the tangent bundle of $M$, $TM$, such that $J^2=$id and satisfying $\omega(JX,JY)=\omega(X,Y)$ and $\omega(JX,X)>0$, for all vector fields $X,Y\in TM$. Manifolds equipped with these structures are called \emph{almost-K\"ahler manifolds}. When, in addition, $J$ is integrable, such manifolds turn out to be complex manifolds and are called \emph{K\"ahler manifolds}. One can define a closed two-form $\rho$ on $M$ associated with $J$ in the following way. Let $\nabla$ denote the Chern connection of $J$. The curvature $R$ of $\nabla$ is a two-form with values in $\End(TM)$, and $\rho$ is defined by the identity $\rho(X,Y) = \tr(JR(X,Y))$, for $X,Y$ vector fields on $M$. In particular, an open problem is to find when the form $\rho$ is a multiple of the symplectic form $\omega$, i.e.,
\begin{equation}\label{eq:rholambdaomega}	
\rho=\lambda\omega,\quad \lambda\in\RR.
\end{equation}
This condition has been extensively studied in several works, especially on K\"ahler manifolds \cite{Calabi1957,Yau1977,Yau1978,DonaldsonChenSun2015I,DonaldsonChenSun2015II,DonaldsonChenSun2015III,Tian1997,Tian1990,Tian2015}, just to name a few references. In this setting, when the condition \eqref{eq:rholambdaomega} is satisfied for some $\lambda\in\RR$, the manifold is said to be \emph{K\"ahler-Einstein}. On the other hand, very little is known about equation \eqref{eq:rholambdaomega} for almost-K\"ahler manifolds. Hence it turns out to be important to look for examples of symplectic non-complex manifolds satisfying this equation. Almost-K\"ahler manifolds with this property will be called \emph{special}.

In this work, we focus on manifolds $M$ that are \emph{homogeneous spaces}, i.e., diffeomorphic to a quotient $G/V$ of a Lie group $G$ by a subgroup $V$. On such manifolds one can choose a symplectic and an almost-complex structure that are invariant by the action of $G$. In this case the condition $\rho=\lambda\omega$ becomes an algebraic condition on the Lie algebra $\mathfrak{g}$ of the group $G$, which greatly simplify the computations. Moreover, when $G$ is assumed to be a real non-compact semisimple Lie group and $V$ a compact subgroup, it turns out that the form $\rho$ can be expressed entirely in terms of the root system of the real semisimple Lie algebra $\mathfrak{g}$. It is possible to list all classical adjoint orbits satisfying \eqref{eq:rholambdaomega} having Lie algebra with rank up to $4$, and all the exceptional ones \cite{DellaVedovaGatti2022}. In particular, adjoint orbits $G/V$ with $G$ an exceptional simple Lie group satisfying \eqref{eq:rholambdaomega} are fully classified. A key ingredient in proving these results are Vogan diagrams \cite[Ch.~VI,Sec.~8]{Knapp1996}, which are combinatorial tools akin to Dynkin diagrams for non-compact real semisimple Lie algebras. Through Vogan diagrams it is possible express the equation \eqref{eq:rholambdaomega} on adjoint orbits of semisimple Lie groups in terms of the root system associated to the real non-compact semisimple Lie algebra $\mathfrak{g}$. In this work we write the equation only in terms of the indices of simple non-compact roots of the Vogan diagram. The results are quite technical, so we briefly summarize the content below.

\begin{thm}
Given a classical Vogan diagram of rank $\ell$ with $S$ the set of indices of non-compact simple roots, we provide explicit formulae to determine when equation $\rho=\lambda\omega$ is satisfied. These formulae depend only on the combinatorics of the set $S$.
\end{thm}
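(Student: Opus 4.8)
The plan is to start from the root-theoretic reformulation of $\rho=\lambda\omega$ obtained in \cite{DellaVedovaGatti2022} and to make each of its ingredients explicit in the coordinates attached to the Vogan diagram. Writing $\mathfrak{m}$ for the complexified tangent space of $G/V$ at the base point, one has $\mathfrak{m}=\bigoplus_{\gamma}\mathfrak{g}_\gamma$ over the roots $\gamma$ that do not vanish on the element $\xi$ defining the orbit; the invariant symplectic form pairs $\mathfrak{g}_\gamma$ with $\mathfrak{g}_{-\gamma}$ with weight proportional to $\gamma(\xi)$, and the Chern--Ricci form $\rho$, evaluated on the same pair, is a weighted sum over the roots $\beta$ occurring in $\mathfrak{m}$ of signs $\pm 1$ that record whether $\beta$ is compact or noncompact (together with the invariant almost-complex structure, which on an adjoint orbit is itself determined by $\xi$). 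Hence $\rho=\lambda\omega$ is equivalent to a finite linear system, with one scalar equation per root $\gamma$ of $\mathfrak{m}$, each of the shape ``(an explicit integer combination of the compactness signs $\varepsilon_\beta$) $=\lambda\,\gamma(\xi)$''.

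The first new ingredient is to express the compactness sign combinatorially. For a Vogan diagram with set $S$ of painted simple roots the standard parity rule states that a root $\beta=\sum_i c_i\alpha_i$ is noncompact exactly when $\sum_{i\in S}c_i$ is odd, so $\varepsilon_\beta=(-1)^{\sum_{i\in S}c_i}$; for the diagrams carrying a nontrivial automorphism --- the flip in type $A_\ell$ and the fork-flip in type $D_\ell$ --- this rule has to be amended, and doing so correctly is part of the task. In each classical root system the expansion coefficients $c_i$ are known in the standard $\varepsilon_i$-coordinates: in type $A_\ell$ a positive root is $\varepsilon_p-\varepsilon_q$ with $\sum_{i\in S}c_i=|S\cap\{p,\dots,q-1\}|$, while in types $B_\ell,C_\ell,D_\ell$ the roots $\varepsilon_p\pm\varepsilon_q$, $\varepsilon_p$, $2\varepsilon_p$ have analogous expansions, with coefficients $2$ occurring near the multiple bond or the fork. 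Substituting these parities turns every equation of the system into an explicit alternating sum $\sum_{p,q}(-1)^{|S\cap I_{pq}|}\,w_{pq}$ with rational weights $w_{pq}$ and intervals $I_{pq}\subseteq\{1,\dots,\ell\}$ determined by $p$ and $q$.

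The core of the argument is to evaluate these alternating interval-sums in closed form. I would fix $\gamma$ (say $\gamma=\varepsilon_a-\varepsilon_b$, and likewise for the other root shapes), break the sum over $\beta$ according to how the coordinates of $\beta$ interlace with $a$ and $b$, and telescope the geometric-type pieces $\sum_p(-1)^{|S\cap[p,a)|}$; each collapses to a bounded expression in the indicators $(-1)^{|S\cap[a,b)|}$ and $(-1)^{|S|}$. The conclusion is that each equation depends on $S$ only through elementary data --- the sizes of the maximal blocks of $S$ and of the gaps between them, and the parities of their partial sums. Demanding that the scalar obtained be the same $\lambda$ for every $\gamma$ then isolates both the admissible orbits (which $\xi$, hence which subset of simple roots may define $V$, is compatible with a given $S$) and the value of $\lambda$, and both are manifestly functions of the block/gap combinatorics of $S$. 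Carrying this out and recording the answer separately for $A_\ell,B_\ell,C_\ell,D_\ell$ yields the theorem.

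The step I expect to be the main obstacle is the bookkeeping in types $B_\ell,C_\ell,D_\ell$. There the roots of $\mathfrak{m}$ meet the long and short strata in a way that depends on whether the support of $\xi$ reaches the end of the diagram; the parity formula for $\varepsilon_\beta$ acquires exceptional cases at the doubled bond and, for $D_\ell$, at the fork and in the presence of the diagram flip; and the alternating sums split into several coupled families (roots $\varepsilon_p-\varepsilon_q$, roots $\varepsilon_p+\varepsilon_q$, and the remaining ``diagonal'' roots), each telescoping differently. Keeping these cases aligned so that the final formulae come out uniform in the block/gap data of $S$, rather than as an unilluminating list of subcases, is where most of the effort lies; type $A_\ell$, by contrast, should be clean and can serve as the template for the rest.
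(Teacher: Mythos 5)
Your plan follows the paper's route essentially step for step: reduce $\rho=\lambda\omega$ to the root-lattice identity $\varphi'=\lambda\varphi$ with one sign condition per painted node, convert compactness of a root $\beta$ into the parity of $\lvert S\cap\mathrm{supp}(\beta)\rvert$, and evaluate the resulting parity-filtered sums over the explicit classical positive root systems as alternating sums of the gaps between consecutive elements of $S$, with separate bookkeeping near the multiple bond and the fork in types $B_\ell$, $C_\ell$, $D_\ell$ --- this is exactly what Section 3 does, merely phrased in $\varepsilon$-coordinates rather than in terms of unbroken strings of simple roots. One small remark: the paper restricts by convention to Vogan diagrams with trivial automorphism, so the amendment of the parity rule that you anticipate for the $A_\ell$ and $D_\ell$ diagram flips is not actually needed here.
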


The results that build the above theorem also shed some light on the combinatorics of Vogan diagrams and how to count certain compact roots in real non-compact semisimple Lie algebras. In addition, they allow to implement a faster algorithm to verify when equation \eqref{eq:rholambdaomega} is satisfied on these orbits for some $\lambda\in\RR$. 
We will prove the results for classical simple Lie algebras only, since these can be trivially adapted to semisimple Lie algebras. In general it would be very interesting to find a unifying way of writing the combinatorial formulae, since the current results require to consider each family of classical Lie algebras $A$, $B$, $C$ and $D$ separately. For example, one can define an operator acting on Vogan diagrams or particular subgraphs and analyzing its properties, like its spectrum. We leave this study for future work.

The paper is organized as follows. In section \ref{sec:specialAKadjointorbits} we recall the relevant background material presented in \cite{DellaVedovaGatti2022}, omitting all the proofs. In section \ref{sec:specialCVD} we prove the results concerning equation \eqref{eq:rholambdaomega} and the combinatorics of Vogan diagrams. More precisely, we give explicit formulae to check if the equation \eqref{eq:rholambdaomega} holds with purely combinatorial arguments, for each family of classical Lie algebras $A$ \ref{sec:anCoefficients}, $B$ \ref{sec:bnCoefficients}, $C$ \ref{sec:cnCoefficients}, $D$ \ref{sec:dnCoefficients}. 


\section{Special almost-K\"ahler adjoint orbits}\label{sec:specialAKadjointorbits}

In this section we recall definitions and results needed for the computations in section \ref{sec::spacialitycondition}. In particular, some facts about adjoint orbits of semisimple Lie groups, the structure theory of real semisimple Lie algebras and the algebraic equation \eqref{eq:rholambdaomega} for adjoint orbits of semisimple Lie groups. We will omit the proofs and the details of all the results, as they are already contained in \cite{DellaVedovaGatti2022}. Additional material on the general theory can be found in \cite{Knapp1996,Helgason1978,GriffithsSchmid1969,DellaVedova2019}.


\subsection{Adjoint orbits}\label{sec:adjointOrbits}

Let $G$ be a Lie group and denote by $\mathfrak g$ its Lie algebra. Then $G$ acts on $\mathfrak{g}$ by the \emph{adjoint action of} $G$ \emph{on} $\mathfrak{g}$
\begin{equation}
\text{Ad}:G\to \text{Aut}(\mathfrak{g}),\quad g\mapsto \text{Ad}_g,
\end{equation}
where $\text{Ad}_g$ is the differential at the identity $e\in G$ of the conjugation $h\to ghg^{-1}$, $\forall h\in G$. By differentiating the above map, one gets
\begin{equation}\label{eq:adjointaction}
\text{ad}:\mathfrak{g}\to\text{Der}(\mathfrak{g}),\quad x\mapsto\text{ad}_x,
\end{equation}
and $\text{ad}_x(y)=[x,y]$, where $[,]$ denotes the commutator and $\text{Der}(\mathfrak{g})$ is the Lie algebra of $\text{Aut}(\mathfrak{g})$. Also in this case we call the map \eqref{eq:adjointaction} \emph{adjoint action of} $\mathfrak{g}$ \emph{on itself}.

Assume that $G$ is, in addition, non-compact, real and semisimple. Let $v \in \mathfrak g$ be a chosen element such that its stabilizer $V$ is a compact subgroup $V \subset G$. Note that the orbit of $v$ under the adjoint action, also called \emph{adjoint orbit} of $v$, is diffeomorphic to $G/V$, by the orbit-stabilizer theorem. By definition of adjoint action, the Lie algebra of the stabilizer $V$ is 
\begin{equation}
\mathfrak v = \{x \in \mathfrak g \ \vert \ [v,x]=0 \}.
\end{equation}
The \emph{Killing form} $B(x,y) = \tr(\ad(x) \ad(y))$, $x,y \in \mathfrak g$, is a bilinear symmetric form defined on a Lie algebra $\mathfrak{g}$. In particular, since $V$ is compact, it holds that $B$ restricts to a negative definite scalar product on the Lie algebra $\mathfrak v$ of $V$ \cite[Ch.~VI, Sec.~1]{Knapp1996}, while the orthogonal complement
\begin{equation}
\mathfrak m = \{x \in \mathfrak g \ \vert \ B(x,y)=0 \mbox{ for all } y \in \mathfrak v\}
\end{equation}
is canonically isomorphic to the tangent space at the identity coset $e$ of the adjoint orbit $G/V$ of $v$.

Recall that $\mathfrak{g}$ is semisimple, thus the Killing form is non-degenerate, by Cartan criterion of semisemplicity \cite[Ch.~1, Sec.~7]{Knapp1996}. So $B$ induces a canonical isomorphism between $\mathfrak g$ and its dual $\mathfrak g^*$. 
As a consequence, $G/V$ turns out to be equipped with the so called \emph{Kirillov-Kostant-Souriau symplectic form} $\omega$ \cite[Sec.~1.2]{Kirillov2004}, which is $G$-invariant and, at the identity coset $e$, corresponds to the symplectic form (i.e., antisymmetric and non-degenerate) $\sigma$ on $\mathfrak m$ defined by
\begin{equation}
\sigma(x,y) = B(v,[x,y]) \quad x,y \in \mathfrak m.
\end{equation}

The relationship between the symplectic form $\omega$ on $G/V$ and $\sigma$ is analyzed in detail in \cite[Section 3]{DellaVedova2019}.


\subsection{Structure of the Lie algebra $\mathfrak g$}\label{sec::structureofg}

Given a complex Lie algebra $\mathfrak g_c$ obtained by complexification of $\mathfrak g$, there exists a unique complex conjugation $\tau$ on $\mathfrak{g}_c$ that fixes $\mathfrak g \subset \mathfrak g_c$. Let $\mathfrak k \subset \mathfrak g$ be a maximal compact subalgebra such that $\mathfrak v \subset \mathfrak k$. Then, considering the complexification $\mathfrak k_c \subset \mathfrak g_c$ of $\mathfrak k$ and its $\ad(\mathfrak k_c)$-invariant complement $\mathfrak p_c$ yields a decomposition $\mathfrak g = \mathfrak k \oplus \mathfrak p$, where $\mathfrak p = \mathfrak p_c \cap \mathfrak g$. This decomposition is called \emph{Cartan decomposition of} $\mathfrak{g}$.

Choose a maximal abelian subalgebra $\mathfrak{h}_0\subset\mathfrak{k}$ that contains $v$ and call $\mathfrak{h}_c\subset\mathfrak{g}_c$ its complexification. Then the adjoint representation of $\mathfrak h_c$ on $\mathfrak g_c$ produces a \emph{root space decomposition}
\begin{equation}
\mathfrak g_c = \mathfrak h_c \oplus \sum_{\alpha \in \Delta} \mathfrak g^\alpha,
\end{equation}
where the set of roots $\Delta$ is a finite subset of the dual space of $\mathfrak h_c$, and each root space,
\begin{equation}\label{eq::root-space}
\mathfrak g^\alpha = \left\{ x \in \mathfrak g_c \ \vert \ [h,x] = \alpha(h)x \mbox{ for all } h \in \mathfrak h_c \right\}
\end{equation}
has dimension one. For more details about the properties of root spaces and root space decompositions see \cite[Sec.~9.2]{Humphreys1972}. In particular, any root space $\mathfrak g^\alpha$ is contained either in $\mathfrak k_c$ or in $\mathfrak p_c$, and the root $\alpha$ is called \emph{compact} if its root space is contained in $\mathfrak k_c$ and \emph{non-compact} if it is contained in $\mathfrak p_c$.
Define the coefficients $\varepsilon_\alpha = -1$ if $\alpha$ is compact and $\varepsilon_\alpha = 1$ otherwise, for $\alpha\in \Delta$. In our case, it turns out to be convenient to choose always \emph{positive root systems}, where a positive root system is a subset $\Delta_+ \subset \Delta$ such that 
\begin{enumerate}\renewcommand{\labelenumi}{\alph{enumi})}
\item for all $\alpha \in \Delta$, either $\alpha$ or $-\alpha$ belongs to $\Delta_+$,
\item if $\alpha, \beta \in \Delta_+$ and $\alpha+\beta \in \Delta$, then $\alpha + \beta \in \Delta_+$.
\end{enumerate}
A positive root is called simple if it cannot be written as a sum $\alpha + \beta$ where $\alpha, \beta \in \Delta_+$. 
Once a positive root system $\Delta_+$ is fixed, the set of simple roots $\Sigma^+ \subset \Delta_+$ turns out to be a basis for $\mathfrak h_{\mathbf R}^*=(i\mathfrak{h}_0)^*$, since one can consider $\Delta$ as a subspace of $\mathfrak h_{\mathbf R}^*$. In addition, if $\alpha\in \Delta_+$ is a root, then it can be written as $\alpha = \sum_{\gamma \in \Sigma^+} n_\gamma \gamma$, with the coefficients $n_{\gamma}$ all positive integers.
Denote by $\Sigma^+_c = \{ \gamma \in \Sigma^+ \, | \, \varepsilon_\gamma = -1 \}$ the set of simple compact roots and by $\Sigma^+_n = \{ \gamma \in \Sigma^+ \, | \, \varepsilon_\gamma = 1 \}$ the set of simple non-compact roots. Then the set of simple roots can be decomposed as $\Sigma^+ = \Sigma^+_c \cup \Sigma^+_n$. It is possible to determine whether a root is compact or not by looking at the compactness of the simple roots $\gamma\in\Sigma^+$ and the coefficients $n_{\gamma}$'s, as the next result shows.
\begin{lem}\label{cor::epsilonalphaformula}
If a positive root $\alpha\in\Delta_+$ has the form $\alpha = \sum_{\gamma \in \Sigma^+} n_\gamma \gamma$, then 
\begin{equation}
\varepsilon_\alpha = (-1)^{1+\sum_{\gamma \in \Sigma^+_n}n_\gamma}.
\end{equation}
\end{lem}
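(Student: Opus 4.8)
The plan is to realise the signs $\varepsilon_\alpha$ as the eigenvalues of a single automorphism of $\mathfrak g_c$ and then exploit multiplicativity under brackets. Let $\theta$ be the Cartan involution of $\mathfrak g$ associated with the decomposition $\mathfrak g = \mathfrak k \oplus \mathfrak p$, that is, the involutive automorphism equal to $+\mathrm{id}$ on $\mathfrak k$ and $-\mathrm{id}$ on $\mathfrak p$, extended complex-linearly to an automorphism of $\mathfrak g_c$ equal to $+\mathrm{id}$ on $\mathfrak k_c$ and $-\mathrm{id}$ on $\mathfrak p_c$. Since $\mathfrak h_0 \subset \mathfrak k$ we have $\mathfrak h_c \subset \mathfrak k_c$, so $\theta$ fixes $\mathfrak h_c$ pointwise and therefore preserves every root space $\mathfrak g^\alpha$; as $\mathfrak g^\alpha$ is one-dimensional and lies either in $\mathfrak k_c$ or in $\mathfrak p_c$, the restriction of $\theta$ to $\mathfrak g^\alpha$ is multiplication by the scalar $-\varepsilon_\alpha$ (which is $+1$ when $\alpha$ is compact and $-1$ when $\alpha$ is non-compact).

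First I would record the multiplicativity step: if $\beta,\gamma\in\Delta$ and $\beta+\gamma\in\Delta$, then $[\mathfrak g^\beta,\mathfrak g^\gamma] = \mathfrak g^{\beta+\gamma} \neq 0$, since the relevant structure constant does not vanish when $\beta+\gamma$ is a root (see \cite[Sec.~9.2]{Humphreys1972}); applying the automorphism $\theta$ to a nonzero element of $[\mathfrak g^\beta,\mathfrak g^\gamma]$ then gives $-\varepsilon_{\beta+\gamma} = (-\varepsilon_\beta)(-\varepsilon_\gamma)$. Next I would prove, by induction on the height $\sum_{\gamma\in\Sigma^+}n_\gamma$ of a positive root $\alpha = \sum_{\gamma\in\Sigma^+}n_\gamma\gamma$, the identity
\begin{equation*}
-\varepsilon_\alpha \;=\; \prod_{\gamma\in\Sigma^+}(-\varepsilon_\gamma)^{n_\gamma}.
\end{equation*}
For height one, $\alpha$ is simple and the identity is a tautology. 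For height at least two, one picks a simple root $\gamma_0$ such that $\beta = \alpha - \gamma_0$ is again a positive root (such a $\gamma_0$ always exists), applies the multiplicativity step to $\beta$ and $\gamma_0$, and invokes the inductive hypothesis for $\beta$, using that the multiplicity vector of $\alpha$ differs from that of $\beta$ only by a $1$ in the $\gamma_0$-entry.

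To conclude, it suffices to observe that $-\varepsilon_\gamma = 1$ for $\gamma\in\Sigma^+_c$ and $-\varepsilon_\gamma = -1$ for $\gamma\in\Sigma^+_n$, so the product collapses to $(-1)^{\sum_{\gamma\in\Sigma^+_n}n_\gamma}$, whence $\varepsilon_\alpha = (-1)^{1+\sum_{\gamma\in\Sigma^+_n}n_\gamma}$. The argument rests only on standard structure theory: that the Cartan involution is an automorphism of $\mathfrak g_c$ with $\mathfrak k_c$ and $\mathfrak p_c$ as its $\pm 1$-eigenspaces, that $[\mathfrak g^\beta,\mathfrak g^\gamma]$ spans $\mathfrak g^{\beta+\gamma}$ whenever $\beta+\gamma$ is a root, and that a positive root of height at least two can be lowered by some simple root. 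I do not anticipate a genuine obstacle here; the only points requiring care are the sign bookkeeping between $\varepsilon_\alpha$ and the $\theta$-eigenvalue, and checking that the bracket $[\mathfrak g^\beta,\mathfrak g^{\gamma_0}]$ is actually nonzero so that the eigenvalue relation transfers.
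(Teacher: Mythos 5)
Your proof is correct: the Cartan involution argument (eigenvalue $-\varepsilon_\alpha$ on $\mathfrak g^\alpha$, multiplicativity via $[\mathfrak g^\beta,\mathfrak g^{\gamma_0}]=\mathfrak g^{\beta+\gamma_0}\neq 0$, and induction on height using that a positive root of height $\geq 2$ can be lowered by a simple root) is exactly the standard route, equivalent to the $\mathbb Z_2$-grading relations $[\mathfrak k_c,\mathfrak k_c]\subset\mathfrak k_c$, $[\mathfrak k_c,\mathfrak p_c]\subset\mathfrak p_c$, $[\mathfrak p_c,\mathfrak p_c]\subset\mathfrak k_c$. The paper itself omits the proof and defers to the cited reference, which argues in essentially this same way, so there is nothing to flag.
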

Remember that $\mathfrak{h}_0\subset \mathfrak{k}$, with $\mathfrak{k}$ compact, hence $B$ restricts to a positive scalar product on $\mathfrak h_{\mathbf R}=i\mathfrak{h}_0$. 
As a consequence there is an isomorphism between $\mathfrak h_{\mathbf R}^*$ and $\mathfrak h_{\mathbf R}$ which takes $\psi\in\mathfrak{h}_{\mathbf{R}}^*$ to the unique $h_\psi\in\mathfrak{h}_{\mathbf{R}}$ such that $\psi(h) = B(h_\psi,h)$, for all $h \in \mathfrak h_{\mathbf R}$.
Thus, it is possible to define a positive scalar product on $\mathfrak h_{\mathbf R}^*$ as $(\psi,\psi') = B(h_{\psi},h_{\psi'})$.
This scalar product defined on $\mathfrak h_{\mathbf R}^*$ allows to define particular subspaces of $\mathfrak h_{\mathbf R}^*$, called \emph{Weyl chambers}, in the following way. Consider the set of hyperplanes $P_\alpha = \{ \psi \in \mathfrak h_{\mathbf R}^* \,|\, (\psi,\alpha)=0\}$, with $\alpha \in \Delta$. These hyperplanes divide $\mathfrak h_{\mathbf R}^*$ into a finite number of closed convex cones, named Weyl chambers. In particular, each positive root system $\Delta_+$ corresponds bijectively to a \emph{dominant Weyl chamber} defined by
\begin{equation}
C = \left\{ \psi \in \mathfrak h_{\mathbf R}^* \,|\, (\psi,\alpha) \geq 0 \mbox{ for all } \alpha \in \Delta_+ \right\}.
\end{equation}
	
Recall that our vector $v$ has been chosen to belong to $\mathfrak h_0$, hence $iv\in\mathfrak h_{\mathbf R}$. So, by the isomorphism between $\mathfrak h_{\mathbf R}$ and $\mathfrak h_{\mathbf R}^*$ recalled above, there exists a unique a co-vector $\varphi \in \mathfrak h_{\mathbf R}^*$ such that $h_\varphi = -iv$. In particular, one can always choose a positive root system $\Delta_+$ such that $\varphi$ belongs to the fundamental Weyl chamber $C$, and in the rest of the paper we assume such a choice of positive root system $\Delta_+$ has been made.


\subsection{Fundamental dominant weights}\label{ssec:fundamentalDominantWeights}
There exists a convenient basis of $C$ by means of fundamental dominant weights, which we now recall. More details are contained in \cite[Sec.~13.1]{Humphreys1972}.
Let $\ell$ be the rank of $\mathfrak g$, that is the dimension of $\mathfrak h_0$, so that we can label the simple roots form 1 to $\ell$, $\Sigma^+ = \{\gamma_1,\dots,\gamma_\ell\}$. Let $A = (A_{ij})$ be the \emph{Cartan matrix} associated to the Lie algebra $\mathfrak{g}_c$, which is defined as
\begin{equation}\label{eq:cartanMatrix}
A_{ij} = \frac{2 (\gamma_i,\gamma_j)}{(\gamma_i,\gamma_i)}.
\end{equation}
Then the \emph{fundamental dominant weights} $\varphi_1,\dots, \varphi_\ell$ are the elements of $\mathfrak h_{\mathbf R}$ defined by $\varphi_j = \sum_{i=1}^\ell (A^{-1})^{ij}\gamma_i$. In particular they form a basis of $\mathfrak h_{\mathbf R}^*$ suited for the computations we are going to perform. 

\begin{lem}\label{lem::vassumoffunddomweights}
Let $\psi \in \mathfrak h_{\mathbf R}^*$ and write $\psi = \sum_{j=1}^\ell w^j \varphi_j$ for some reals $w^1, \dots, w^\ell$.
Then one has $(\psi,\alpha) \geq 0$ for each positive root $\alpha$ if and only if all $w^i$'s are non-negative.
Moreover, denoted by $\Delta_+ \setminus \psi^\perp$ the subset of positive roots which are not orthogonal to $\psi$, one has $\Delta_+ \setminus \psi^\perp = \spann \left\{ \gamma_j \ |\ w^j \neq 0\right\} \cap \Delta_+$.
\end{lem}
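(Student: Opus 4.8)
The plan is to exploit the defining duality between fundamental dominant weights and simple coroots, together with the expansion of an arbitrary positive root in the simple-root basis. Write $\psi = \sum_{j=1}^\ell w^j \varphi_j$ and recall that the fundamental dominant weights satisfy $\frac{2(\varphi_j,\gamma_i)}{(\gamma_i,\gamma_i)} = \delta_{ij}$, which follows immediately from the formula $\varphi_j = \sum_i (A^{-1})^{ij}\gamma_i$ and the definition \eqref{eq:cartanMatrix} of the Cartan matrix. Consequently $\frac{2(\psi,\gamma_i)}{(\gamma_i,\gamma_i)} = w^i$ for every simple root $\gamma_i$, so $(\psi,\gamma_i)$ and $w^i$ have the same sign.

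For the first claim, one direction is trivial: if $(\psi,\alpha)\geq 0$ for every positive root $\alpha$, then in particular $(\psi,\gamma_i)\geq 0$ for each simple root $\gamma_i$, hence $w^i\geq 0$ by the displayed relation. For the converse, suppose all $w^i\geq 0$ and let $\alpha\in\Delta_+$. By the structure theory recalled above, $\alpha = \sum_{i=1}^\ell n_i\gamma_i$ with all $n_i$ non-negative integers, so $(\psi,\alpha) = \sum_i n_i (\psi,\gamma_i) = \sum_i n_i \frac{(\gamma_i,\gamma_i)}{2} w^i \geq 0$, since each summand is a product of non-negative factors. This proves the equivalence.

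For the second claim, I would again expand $\alpha = \sum_{i=1}^\ell n_i\gamma_i \in \Delta_+$ and compute $(\psi,\alpha) = \sum_i n_i \frac{(\gamma_i,\gamma_i)}{2} w^i$. When all $w^i\geq 0$ this is a sum of non-negative terms, so it vanishes if and only if every term with $w^i\neq 0$ has $n_i = 0$; equivalently, $\alpha$ lies in the span of $\{\gamma_j : w^j = 0\}$. Taking complements inside $\Delta_+$, the roots not orthogonal to $\psi$ are exactly those whose support meets $\{j : w^j\neq 0\}$, i.e. $\Delta_+\setminus\psi^\perp = \spann\{\gamma_j : w^j\neq 0\}\cap\Delta_+$, as claimed. (Here one uses that $\psi\in C$, so the first part applies and all $w^i$ are non-negative, making the sign argument valid.)

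The only mild subtlety — and the step I would be most careful about — is the sign positivity in the second claim: without knowing $w^i\geq 0$ the sum $\sum_i n_i \frac{(\gamma_i,\gamma_i)}{2} w^i$ could vanish through cancellation even when some $w^i\neq 0$ and the corresponding $n_i\neq 0$. This is why the hypothesis $\psi\in C$ (equivalently, by the first part, $w^i\geq 0$ for all $i$) is essential, and it should be stated explicitly when invoking the lemma. Everything else is a direct unwinding of the definitions, so no real obstacle is expected.
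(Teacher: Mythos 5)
Your first claim and its proof are correct and follow the standard route: the duality $\tfrac{2(\varphi_j,\gamma_i)}{(\gamma_i,\gamma_i)}=\delta_{ij}$, which is immediate from $\varphi_j=\sum_i(A^{-1})^{ij}\gamma_i$ and \eqref{eq:cartanMatrix}, gives $(\psi,\gamma_i)=\tfrac{(\gamma_i,\gamma_i)}{2}w^i$, and positivity on all of $\Delta_+$ follows from positivity on $\Sigma^+$ because every positive root has non-negative coordinates in the simple roots. (The paper itself omits the proof, deferring to \cite{DellaVedovaGatti2022}, but this is certainly the intended argument.)

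The second half of your argument has a genuine gap at the final ``i.e.''. From $(\psi,\alpha)=\sum_i n_i\tfrac{(\gamma_i,\gamma_i)}{2}w^i$ with all $w^i\geq 0$ you correctly conclude that $\alpha\perp\psi$ exactly when $n_i=0$ for every $i$ with $w^i\neq 0$, that is, $\Delta_+\cap\psi^\perp=\spann\{\gamma_j\,|\,w^j=0\}\cap\Delta_+$, and hence that $\Delta_+\setminus\psi^\perp$ consists of the positive roots whose support \emph{meets} $\{j\,|\,w^j\neq 0\}$. But that set is not $\spann\{\gamma_j\,|\,w^j\neq 0\}\cap\Delta_+$, which consists of the positive roots whose support is \emph{contained in} $\{j\,|\,w^j\neq 0\}$; the two already differ for $A_2$ with $\psi=\varphi_1$, where $\gamma_1+\gamma_2$ is not orthogonal to $\psi$ yet does not lie in $\spann\{\gamma_1\}$. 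So the set identity you end with holds only as the containment $\Delta_+\setminus\psi^\perp\supseteq\spann\{\gamma_j\,|\,w^j\neq 0\}\cap\Delta_+$. What is true, and what the paper actually uses (the sums over $\Span\{\gamma_i\,|\,i\in S^c\}\cap\Delta_+$ in Theorem \ref{cor::specialvALL} and the decomposition \eqref{eq:decompositionv}), is the complementary statement $\Delta_+\cap\psi^\perp=\spann\{\gamma_j\,|\,w^j=0\}\cap\Delta_+$, equivalently $\Delta_+\setminus\psi^\perp=\Delta_+\setminus\spann\{\gamma_j\,|\,w^j=0\}$. The lemma as printed appears to carry a typo; the computation you already performed proves the corrected version, and the error enters only in the last rewriting, which should not be passed off as a tautology. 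Your closing remark that the hypothesis $w^i\geq 0$ (equivalently $\psi\in C$) is indispensable for this part is correct and worth keeping: without it the sum can vanish by cancellation and the span description fails entirely.
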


The above lemma implies that the fundamental Weyl chamber $C$ is the (closed) convex cone spanned by the fundamental dominant weights $\varphi_1,\dots,\varphi_\ell$. Therefore we can write
\begin{equation}
\varphi = \sum_{i=1}^\ell v^i \varphi_i \quad \mbox{ for some } \quad v^1,\dots,v^\ell \geq 0,
\end{equation}
since we chose $\varphi\in C$.

We also recall an element of the root lattice that will come up frequently in our computation, which is denoted by $\delta$ \cite[Sec.~10.2]{Humphreys1972}. It is defined equivalently in terms of the roots or the fundamental dominant weights by
\begin{equation}\label{eq: deltavector}
\delta=\frac{1}{2}\sum_{\alpha\in\Delta_+}\alpha=\sum_{i=1}^{\ell}\varphi_i.
\end{equation}

\subsection{Canonical almost-complex structure}\label{sec::definitionJ}

In this section we recall a canonically defined homogeneous almost-complex structure on $M=G/V$, which turns out to be compatible with the Kirillov-Kostant-Souriau symplectic form $\omega$ \cite{DellaVedova2019,AlekseevskyPodesta2018}. An \emph{almost-complex structure} on $M$ is an element $J\in \text{End}(TM)$ such that $J^2=\text{id}$, and it is \emph{compatible} with $\omega$ if
\begin{equation}
\omega(JX,JY)=\omega(X,Y),\quad \omega(JX,X)>0,\quad\forall X,Y\in TM.
\end{equation}

Consider a root $\alpha \in \Delta$ and define $\lambda_\alpha = s_{\alpha}(\alpha,\varphi)\in \RR$, where $s_{\alpha}=1$ if $\alpha\in\Delta_+$ and $s_{\alpha}=-1$ otherwise. Note that, by the assumption $\varphi\in C$, it holds that $\lambda_\alpha \geq 0$, and $\lambda_\alpha=0$ when $\alpha$ is orthogonal to $\varphi$. Then, for each root $\alpha \in \Delta_+$ define the vectors
\begin{equation}\label{eq::uandvintermsofe}
u_\alpha = \frac{i^{(1-\varepsilon_\alpha)/2}}{\sqrt 2} (e_\alpha + e_{-\alpha}), \qquad
v_\alpha = \frac{i^{(3-\varepsilon_\alpha)/2}}{\sqrt 2} s_\alpha (e_\alpha - e_{-\alpha}),
\end{equation}
where $e_{\alpha}\in\mathfrak{g}^{\alpha}$ is an element chosen such as $[e_{\alpha},e_{-\alpha}]=h_{\alpha}$ and satisfying other additional properties \cite[Pg.~265]{GriffithsSchmid1969}. Observe that one has $u_\alpha = u_{-\alpha}$ and similarly $v_\alpha = v_{-\alpha}$. As a consequence of the choices of the $e_{\alpha}$'s one has the following statement.
\begin{lem}\label{lem::advonualphaandvalpha}
For all roots $\alpha, \beta \in \Delta$ one has
\begin{enumerate}
\item $B(u_\alpha,u_\beta) = B(v_\alpha,v_\beta) = (\delta_{\alpha,\beta} + \delta_{\alpha,-\beta})\varepsilon_\alpha$, \label{item::Bualphaubeta}
\item $B(u_\alpha,v_\beta)=0$, \label{item::Bualphavbeta}
\item $u_\alpha, v_\alpha \in \mathfrak g$, \label{item::alphavalphaing}
\item $[v,u_\alpha] = \lambda_\alpha v_\alpha$ and $[v,v_\alpha] = -\lambda_\alpha u_\alpha$, \label{item::[v,ualpha]}
\end{enumerate}
where $\delta_{a,b}=1$ if $a=b$ and $0$ otherwise.
\end{lem}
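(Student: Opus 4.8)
The plan is to verify the four identities by direct computation from the definitions in \eqref{eq::uandvintermsofe}, using standard structure-constant properties of the $e_\alpha$'s and the basic facts about the Killing form restricted to root spaces. Throughout I will use that $B(\mathfrak g^\alpha,\mathfrak g^\beta)=0$ unless $\beta=-\alpha$, that $B(e_\alpha,e_{-\alpha})$ is a nonzero scalar fixed by the normalization $[e_\alpha,e_{-\alpha}]=h_\alpha$ (together with the additional Chevalley/Griffiths--Schmid normalization conditions cited on \cite[Pg.~265]{GriffithsSchmid1969}, which force $B(e_\alpha,e_{-\alpha})$ to equal a specific value, and $\tau(e_\alpha)=\pm e_{-\alpha}$ according to compactness of $\alpha$), and that $[h,e_\alpha]=\alpha(h)e_\alpha$ for $h\in\mathfrak h_c$.

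First I would prove \eqref{item::Bualphaubeta} and \eqref{item::Bualphavbeta} together. Expanding $B(u_\alpha,u_\beta)$ bilinearly, only the cross terms $B(e_\alpha,e_{-\beta})$ and $B(e_{-\alpha},e_\beta)$ survive the orthogonality of distinct root spaces, so the expression vanishes unless $\beta=\pm\alpha$; in that case it reduces to a multiple of $B(e_\alpha,e_{-\alpha})$, and the prefactor $i^{(1-\varepsilon_\alpha)/2}$ contributes a sign that is $+1$ when $\alpha$ is non-compact and $-1$ when $\alpha$ is compact, yielding the factor $\varepsilon_\alpha$ once the normalization $B(e_\alpha,e_{-\alpha})$ is pinned down. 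The same bookkeeping with the extra factor $s_\alpha^2=1$ handles $v_\alpha$, and the $u$--$v$ pairing in \eqref{item::Bualphavbeta} vanishes because the two surviving cross terms cancel (the $e_\alpha-e_{-\alpha}$ antisymmetry against $e_\alpha+e_{-\alpha}$), again using $u_\alpha=u_{-\alpha}$, $v_\alpha=v_{-\alpha}$ to absorb sign ambiguities.

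Next, \eqref{item::alphavalphaing}: since $u_\alpha=u_{-\alpha}$ and $v_\alpha=v_{-\alpha}$, it suffices to check that each is fixed by the conjugation $\tau$ that cuts out $\mathfrak g$. Applying $\tau$ to \eqref{eq::uandvintermsofe} and using $\tau(e_\alpha)=\varepsilon_\alpha$-dependent multiple of $e_{-\alpha}$ (precisely, $\tau(e_\alpha)=-\varepsilon_\alpha e_{-\alpha}$ with the Griffiths--Schmid normalization, so $\tau$ swaps $e_\alpha\leftrightarrow e_{-\alpha}$ up to the sign $-\varepsilon_\alpha$), together with $\tau(i)=-i$, one checks the powers of $i$ are arranged exactly so that $\tau(u_\alpha)=u_\alpha$ and $\tau(v_\alpha)=v_\alpha$; this is the place where the precise exponents $(1-\varepsilon_\alpha)/2$ and $(3-\varepsilon_\alpha)/2$ and the sign $s_\alpha$ matter.

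Finally, \eqref{item::[v,ualpha]}: compute $[v,e_{\pm\alpha}]=\pm\alpha(v)e_{\pm\alpha}=\mp i\,(\alpha,\varphi)e_{\pm\alpha}$ using $h_\varphi=-iv$ and $\alpha(h_\varphi)=(\alpha,\varphi)$, hence $\alpha(v)=i^{-1}\cdot(-1)\cdot$ the wrong-sign bookkeeping — more carefully, $v=i h_\varphi$ up to sign so $\alpha(v)=\pm i(\alpha,\varphi)$, and then $[v,e_\alpha+e_{-\alpha}]$ becomes proportional to $e_\alpha-e_{-\alpha}$ and vice versa, i.e.\ $[v,\,\cdot\,]$ rotates the $(u_\alpha,v_\alpha)$-plane. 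Tracking the constant: $\lambda_\alpha=s_\alpha(\alpha,\varphi)$ appears because on $\Delta_+$ we have $s_\alpha=1$, and the shift in the power of $i$ between $u_\alpha$ and $v_\alpha$ (namely from $(1-\varepsilon_\alpha)/2$ to $(3-\varepsilon_\alpha)/2$, a difference of $1$) is exactly what converts the factor $\pm i$ from $\alpha(v)$ into the real factor $\pm\lambda_\alpha$ with the signs stated. I expect the main obstacle to be purely clerical: keeping the four-fold sign bookkeeping (the power of $i$, the conjugation sign $-\varepsilon_\alpha$, the sign $s_\alpha$, and the sign from $\pm\alpha$) consistent across all four items, and making sure the normalization of $B(e_\alpha,e_{-\alpha})$ and $\tau(e_\alpha)$ from \cite{GriffithsSchmid1969} is the one that makes the constants come out as $\varepsilon_\alpha$ and $\lambda_\alpha$ exactly rather than up to an overall scalar.
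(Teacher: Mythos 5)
Your plan is sound, and it is worth noting that the paper itself gives no proof of this lemma --- it is recalled verbatim from the reference \cite{DellaVedovaGatti2022} with proofs explicitly omitted --- so the only reasonable comparison is against the standard direct computation, which is exactly what you propose. Items \ref{item::Bualphaubeta} and \ref{item::Bualphavbeta} follow as you say from $B(\mathfrak g^\alpha,\mathfrak g^\beta)=0$ unless $\beta=-\alpha$ together with $B(e_\alpha,e_{-\alpha})=1$, since $i^{1-\varepsilon_\alpha}=\varepsilon_\alpha$ and $-i^{3-\varepsilon_\alpha}=\varepsilon_\alpha$ for $\varepsilon_\alpha=\pm1$; and your computation of item \ref{item::[v,ualpha]} is right: from $v=ih_\varphi$ one gets $\alpha(v)=i(\alpha,\varphi)$, and the shift by one in the exponent of $i$ between $u_\alpha$ and $v_\alpha$ absorbs that $i$ to give the real constant $\lambda_\alpha=s_\alpha(\alpha,\varphi)$.

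One detail to fix in item \ref{item::alphavalphaing}: with this paper's convention $\varepsilon_\alpha=-1$ for compact and $+1$ for non-compact roots, the conjugation fixing $\mathfrak g$ satisfies $\tau(e_\alpha)=\varepsilon_\alpha e_{-\alpha}$, not $-\varepsilon_\alpha e_{-\alpha}$ as you wrote. (Indeed $\tau=\theta$ on $\mathfrak k_c$, where $\theta(e_\alpha)=-e_{-\alpha}$ is the compact-form conjugation, so a compact root gives $\tau(e_\alpha)=-e_{-\alpha}=\varepsilon_\alpha e_{-\alpha}$, and $\tau=-\theta$ on $\mathfrak p_c$ gives $\tau(e_\alpha)=+e_{-\alpha}=\varepsilon_\alpha e_{-\alpha}$.) With your sign the check $\tau(u_\alpha)=u_\alpha$ would fail, since $\overline{i^{(1-\varepsilon_\alpha)/2}}\,c_\alpha=i^{(1-\varepsilon_\alpha)/2}$ forces $c_\alpha=i^{1-\varepsilon_\alpha}=\varepsilon_\alpha$. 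You flagged this normalization as the delicate point, so it is a correctable slip rather than a gap in the method, but it should be stated with the correct sign before the verification of $\tau(v_\alpha)=v_\alpha$ (which then reduces to the identity $-\varepsilon_\alpha=i^{3-\varepsilon_\alpha}$) goes through.
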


As a consequence of the above lemma, we have the $B$-orthogonal decomposition
\begin{equation}\label{eq:decompositiong}
\mathfrak g = \mathfrak h_0 \oplus \sum_{\alpha \in \Delta_+} \Span\{u_\alpha ,v_\alpha\}=\mathfrak h_0\oplus\sum_{\alpha \in \Delta_+\cap \varphi^\perp} \Span\{u_\alpha ,v_\alpha\}\oplus\sum_{\alpha \in \Delta_+\setminus \varphi^\perp} \Span\{u_\alpha ,v_\alpha\},
\end{equation}
where $\Delta_+ \cap \varphi^\perp$ denotes the subset of positive roots which are orthogonal to $\varphi$. Note that, as a consequence of point \ref{item::Bualphavbeta} of lemma \ref{lem::advonualphaandvalpha}, a root $\alpha$ is in $\Delta_+ \cap \varphi^\perp$ if $\lambda_\alpha=0$ and in its complement if $\lambda_\alpha>0$. Thus, item \ref{item::[v,ualpha]} of Lemma \ref{lem::advonualphaandvalpha} shows that the Lie algebra $\mathfrak v$ of the stabilizer of $v$ can be decomposed as 
\begin{equation}\label{eq:decompositionv}
\mathfrak{v}=\mathfrak h_0\oplus\sum_{\alpha \in \Delta_+\cap \varphi^\perp} \Span\{u_\alpha ,v_
\alpha\}.
\end{equation}
In particular, all roots belonging to $\Delta_+ \cap \varphi^\perp$ are compact, by compactness of $V$. On the other hand, putting together the decompositions \eqref{eq:decompositionv} and \eqref{eq:decompositiong}, one has 
\begin{equation}
\mathfrak{m}=\sum_{\alpha \in \Delta_+\setminus \varphi^\perp} \Span\{u_\alpha ,v_\alpha\}.
\end{equation}

Finally, let $H$ be the complex structure on $\mathfrak h_0^\perp = \Span_{\alpha \in \Delta_+} \{u_\alpha,v_\alpha\}$ defined by
\begin{equation}\label{eq::defH}
Hu_\alpha = \varepsilon_\alpha v_\alpha, \qquad Hv_\alpha = - \varepsilon_\alpha u_\alpha \qquad \mbox{for all } \alpha \in \Delta.
\end{equation}
Recall that $H:\mathfrak h_0^\perp\to \mathfrak h_0^\perp$ and $H^2=\text{id}_{\mathfrak h_0^\perp}$. We denote by $J\in\text{End}(T(G/V))$ the canonical homogeneous almost-complex structure on the orbit $G/V$ induced by $H$ on $\mathfrak m$. A manifold equipped with a symplectic form and a compatible almost-complex structure is called \emph{almost-K\"ahler}, hence $(G/V,\omega,J)$ turns out to be an almost-K\"ahler manifold.


\subsection{The condition $\rho = \lambda \omega$}\label{sec::spacialitycondition}

In this section we recall the necessary basic notions of symplectic geometry and the differential equation of interest. Note that, in our setting, the equation $\rho = \lambda \omega$ will be written as an algebraic equation, and we will deal only with that.

Let $(M,\omega)$ be a symplectic manifold, and let $J$ be a compatible almost-complex structure on it. We can define a closed two-form $\rho$ on $M$, the \emph{Chern-Ricci form} of $J$, in the following way. Let $\nabla$ be the \emph{Chern connection} on $M$, i.e., the unique affine connection on $M$ such that  $\nabla \omega = 0$, $\nabla J = 0$ and its torsion is exactly the Nijenhuis tensor of $J$. Its curvature $R$ is a two-form with values in $\text{End}(TM)$, and the form $\rho$ is defined by $\rho(X,Y) = \tr(JR(X,Y))$. 

As explained in the introduction, a common question to ask is if the equation $\rho = \lambda \omega$ is satisfied for some constant $\lambda\in\RR$. If this last equation is satisfied and $J$ is integrable (i.e., $M$ is a complex manifold), then $(M,\omega,J)$ is called \emph{K\"ahler-Einstein} manifold. On the other hand, a non-complex almost-K\"ahler manifold $(M,\omega,J)$ which satisfies $\rho = \lambda \omega$ is sometimes called \emph{Hermitian-Einstein}, \emph{special} \cite{DellaVedova2019}, or \emph{Chern-Einstein} \cite{AlekseevskyPodesta2018}. The nomenclature is not standard in this case. Below we are going to consider the condition $\rho = \lambda \omega$ on adjoint orbits \ref{sec:adjointOrbits} equipped with the almost-complex structure induced by $H$ \ref{sec::definitionJ}.

In our setting, the Chern-Ricci form $\rho$ of $J$ is determined by the two form $B(v',[\cdot,\cdot])$, with $ v'= 2 \sum_{\alpha \in \Delta_+ \setminus \varphi^\perp} [u_\alpha,v_\alpha]$ \cite[Sec.~4.2]{DellaVedova2019}.
By using the definition of $u_\alpha,v_\alpha$ \eqref{eq::uandvintermsofe}, one can write
\begin{equation}\label{eq::v'sumofroots}
v' = -2i \sum_{\alpha \in \Delta_+ \setminus \varphi^\perp} \varepsilon_\alpha h_\alpha.
\end{equation}
Hence, one can consider the element of the root lattice 
\begin{equation}\label{eq::definitionvarphi'}
\varphi' = - 2 \sum_{\alpha \in \Delta_+ \setminus \varphi^\perp} \varepsilon_\alpha \alpha \in \mathfrak h_{\mathbf R}^*
\end{equation}
satisfying $h_{\varphi'} = -iv'$. At this point, the condition involving the Chern-Ricci form $\rho = \lambda \omega$ turns out to be equivalent to $v' = \lambda v$ and $\varphi' = \lambda \varphi$.

We introduce the element of the root lattice 
\begin{equation}\label{eq:eta}
\eta = -2 \sum_{\alpha \in \Delta_+} \varepsilon_\alpha \alpha \in \mathfrak h_{\mathbf R}^*,
\end{equation}
that allows to write the condition in a more tractable way. This element can be thought as analogous to the $\delta$-vector \eqref{eq: deltavector} for real simple Lie algebras. Observe that $\eta$ depends on the semisimple Lie algebra $\mathfrak g$ and on the chosen set of positive roots. One can then write the condition $\rho = \lambda \omega$ as
\begin{equation}\label{eq::rho=lambdaomegaintermsoftau}
\eta - 2 \sum_{\alpha \in \Delta_+ \cap \varphi^\perp} \alpha = \lambda \varphi,
\end{equation}
where we used the fact that no non-compact roots can be orthogonal to $\varphi$. This is the equation we are going to study in section \ref{sec:specialCVD}, in terms of the combinatorics of Vogan diagrams. In particular, one has the following result, which is the building block to determine speciality of a given adjoint orbit. 
\begin{thm}\label{cor::specialvALL}
For any $\varphi \in \mathfrak h_{\mathbf R}^*$ and any real $\lambda \in\{-1,0,1\}$ the following are equivalent:
\begin{itemize}
\item $\varphi$ belongs to the dominant Weyl chamber $C$, the stabilizer of $v=ih_\varphi$ is compact, and one has $\varphi'=\lambda \varphi$;
\item there exists $S \subset \{ 1,\dots,\ell \}$ such that $i \in S$ whenever $\gamma_i$ is a non-compact simple root, and $\varphi_S = \eta - 2 \sum_{\alpha \in \Span\{ \gamma_i | i \in S^c\} \cap \Delta_+} \alpha$ satisfies $(\varphi_S,\gamma_i) = \lambda |(\varphi_S,\gamma_i)|$ for all $i \in S$. Moreover 
\begin{equation}
\varphi= \left\{ 
\begin{array}{ll}
\lambda \varphi_S & \mbox{if } \lambda = \pm 1 \\
\sum_{i \in S} v^i \varphi_i \mbox{ for some }v^i>0 & \mbox{if } \lambda = 0.
\end{array}
\right.
\end{equation}
\end{itemize}
\end{thm}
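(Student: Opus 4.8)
The plan is to establish the two implications directly, translating the geometric condition $\varphi' = \lambda\varphi$ and the compactness of the stabilizer into the purely combinatorial condition on $S$. First I would unravel the definitions. By the discussion preceding the statement, the hypothesis that $\varphi$ lies in $C$ and that $v = ih_\varphi$ has compact stabilizer means precisely that the positive system $\Delta_+$ can be chosen so that all the Weyl-chamber inequalities $(\varphi,\alpha)\ge 0$ hold, and Lemma \ref{lem::vassumoffunddomweights} then gives that $\Delta_+\cap\varphi^\perp = \Span\{\gamma_j \mid v^j = 0\}\cap\Delta_+$ where $\varphi = \sum_j v^j\varphi_j$ with all $v^j\ge 0$. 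So the natural candidate is to set $S = \{i \mid v^i \neq 0\}$ when $\lambda = 0$, and to let $S$ be determined by the support of $\varphi_S$ (suitably chosen) when $\lambda = \pm 1$. The key observation tying this to $\varphi'$ is that, starting from $\varphi' = -2\sum_{\alpha\in\Delta_+\setminus\varphi^\perp}\varepsilon_\alpha\alpha$ in \eqref{eq::definitionvarphi'} and $\eta = -2\sum_{\alpha\in\Delta_+}\varepsilon_\alpha\alpha$ in \eqref{eq:eta}, one has $\varphi' = \eta + 2\sum_{\alpha\in\Delta_+\cap\varphi^\perp}\varepsilon_\alpha\alpha$, and since every root orthogonal to $\varphi$ is compact (so $\varepsilon_\alpha = -1$ there), this equals $\eta - 2\sum_{\alpha\in\Delta_+\cap\varphi^\perp}\alpha$, which is exactly \eqref{eq::rho=lambdaomegaintermsoftau}. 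Thus $\varphi' = \varphi_S$ once $S^c$ is taken to index the simple roots spanning $\Delta_+\cap\varphi^\perp$.

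Next I would handle the forward direction. Given $\varphi\in C$ with compact stabilizer and $\varphi' = \lambda\varphi$, set $S = \{i \mid (\varphi,\gamma_i)\neq 0\}$; equivalently, by Lemma \ref{lem::vassumoffunddomweights}, $S = \{i \mid v^i\neq 0\}$ and $S^c = \{i \mid v^i = 0\}$. Since $\Span\{\gamma_i \mid i\in S^c\}\cap\Delta_+ = \Delta_+\cap\varphi^\perp$, the vector $\varphi_S$ defined in the theorem coincides with $\varphi'$ by the identity above. All non-compact simple roots $\gamma_i$ have $(\varphi,\gamma_i) > 0$ (a non-compact root cannot be orthogonal to $\varphi$, as noted in the text), hence $i\in S$, giving the required inclusion. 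Finally $\varphi' = \lambda\varphi$ forces, for $i\in S$, $(\varphi_S,\gamma_i) = (\varphi',\gamma_i) = \lambda(\varphi,\gamma_i)$; since $(\varphi,\gamma_i) > 0$ for $i\in S$, this reads $(\varphi_S,\gamma_i) = \lambda|(\varphi_S,\gamma_i)|$ (using $\lambda\in\{-1,0,1\}$, so $|\lambda(\varphi,\gamma_i)| = |(\varphi,\gamma_i)|$ when $\lambda\neq 0$ and both sides vanish when $\lambda = 0$). The displayed formula for $\varphi$ then follows: if $\lambda = \pm 1$ then $\varphi = \lambda\varphi' = \lambda\varphi_S$, and if $\lambda = 0$ then $\varphi = \sum_{i\in S} v^i\varphi_i$ with $v^i > 0$ by construction of $S$.

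For the reverse direction, suppose $S$ is given with the stated properties. I would reverse-engineer $\varphi$ from the two cases. If $\lambda = \pm 1$, define $\varphi = \lambda\varphi_S$. The sign conditions $(\varphi_S,\gamma_i) = \lambda|(\varphi_S,\gamma_i)| \ge 0$ for $i\in S$ give $(\varphi,\gamma_i) = \lambda^2|(\varphi_S,\gamma_i)| = |(\varphi_S,\gamma_i)| \ge 0$; for $i\in S^c$ one checks $(\varphi_S,\gamma_i) = 0$ from the very definition of $\varphi_S$ (since subtracting $2\sum_{\alpha\in\Span\{\gamma_j\mid j\in S^c\}\cap\Delta_+}\alpha$ from $\eta$ is designed to kill exactly the pairings with $\gamma_i$, $i\in S^c$ — this is the one point that needs the explicit structure and is essentially \eqref{eq::rho=lambdaomegaintermsoftau} read backwards), so $(\varphi,\gamma_i) = 0$ there too; hence $\varphi\in C$ by Lemma \ref{lem::vassumoffunddomweights}. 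Then $\Delta_+\cap\varphi^\perp = \Span\{\gamma_i\mid i\in S^c\}\cap\Delta_+$, again by that lemma, so these roots are forced compact and the stabilizer of $v = ih_\varphi$ is compact (it is a maximal rank subalgebra built from $\mathfrak h_0$ and compact root spaces). The inclusion hypothesis — $i\in S$ whenever $\gamma_i$ is non-compact — guarantees no non-compact root sneaks into $S^c$, so the subalgebra really is contained in $\mathfrak k$. Finally $\varphi' = \varphi_S = \lambda\varphi$ follows from the identity in the first paragraph and $\varphi = \lambda\varphi_S$, $\lambda^2 = 1$. The case $\lambda = 0$ is similar but easier: put $\varphi = \sum_{i\in S} v^i\varphi_i$ for any choice of $v^i > 0$; then $\varphi\in C$ automatically, $\Delta_+\cap\varphi^\perp = \Span\{\gamma_i\mid i\in S^c\}\cap\Delta_+$, the stabilizer is compact for the same reason, and $\varphi' = \varphi_S$ satisfies $(\varphi_S,\gamma_i) = 0 = 0\cdot\varphi$ on $S$ by hypothesis while $(\varphi_S,\gamma_i) = 0$ on $S^c$ by construction, so $\varphi_S = 0 = \lambda\varphi$.

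The main obstacle I anticipate is the bookkeeping around which roots are orthogonal to $\varphi$ versus which lie in $\Span\{\gamma_i\mid i\in S^c\}$, and verifying carefully that $(\varphi_S,\gamma_i) = 0$ for all $i\in S^c$ — i.e., that the correction term $-2\sum_{\alpha\in\Span\{\gamma_i\mid i\in S^c\}\cap\Delta_+}\alpha$ exactly compensates the pairing of $\eta$ with the $\gamma_i$, $i\in S^c$. This is precisely the content of rewriting $\varphi'$ as in \eqref{eq::rho=lambdaomegaintermsoftau}, and it relies on the fact, emphasized in the text, that no non-compact root is orthogonal to $\varphi$, together with Lemma \ref{cor::epsilonalphaformula} to control the $\varepsilon_\alpha$ signs inside the sub-root-system generated by $S^c$. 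Once that identity is in hand, the rest is a matter of matching the two cases $\lambda = \pm 1$ and $\lambda = 0$ to the two branches of the displayed formula for $\varphi$, and invoking Lemma \ref{lem::vassumoffunddomweights} to pass between dominance, orthogonality, and supports.
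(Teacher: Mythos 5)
The paper does not actually prove Theorem \ref{cor::specialvALL}: it is quoted from \cite{DellaVedovaGatti2022} in a section that explicitly omits all proofs, so there is no in-text argument to compare against line by line. Your reconstruction follows the route the surrounding text suggests and is essentially sound: the identity $\varphi'=\eta-2\sum_{\alpha\in\Delta_+\cap\varphi^\perp}\alpha$ (using that every root orthogonal to $\varphi$ is compact), Lemma \ref{lem::vassumoffunddomweights} to identify $\Delta_+\cap\varphi^\perp$ with $\spann\{\gamma_i\mid i\in S^c\}\cap\Delta_+$, and the sign bookkeeping for $i\in S$ are all correct and give the forward implication cleanly.

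Two points in the converse deserve more than a gesture. First, the claim $(\varphi_S,\gamma_i)=0$ for all $i\in S^c$, which you rightly single out as the crux, is not ``by definition'': it needs an argument. The clean one is via the simple reflection $s_i$. Since $\gamma_i$ is compact for $i\in S^c$, Lemma \ref{cor::epsilonalphaformula} gives $\varepsilon_{s_i\alpha}=\varepsilon_\alpha$, and $s_i$ permutes $\Delta_+\setminus\{\gamma_i\}$ while sending $\gamma_i$ to $-\gamma_i$; hence $s_i\eta=\eta-4\gamma_i$, i.e.\ $2(\eta,\gamma_i)/(\gamma_i,\gamma_i)=4$, and the analogous computation for $\sigma=\sum_{\alpha\in\spann\{\gamma_j\mid j\in S^c\}\cap\Delta_+}\alpha$ gives $2(\sigma,\gamma_i)/(\gamma_i,\gamma_i)=2$, so the two contributions cancel in $\varphi_S=\eta-2\sigma$. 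Second, for $\lambda=\pm1$ you assert $\Delta_+\cap\varphi^\perp=\spann\{\gamma_i\mid i\in S^c\}\cap\Delta_+$ and deduce compactness of the stabilizer; this requires $(\varphi_S,\gamma_i)\neq 0$ for every $i\in S$, which the hypothesis $(\varphi_S,\gamma_i)=\lambda\lvert(\varphi_S,\gamma_i)\rvert$ does not guarantee. If some non-compact $\gamma_{i_0}$ with $i_0\in S$ has $(\varphi_S,\gamma_{i_0})=0$ (e.g.\ $A_2$ with both simple roots painted, where $\varphi_S=0$), then $\gamma_{i_0}\in\Delta_+\cap\varphi^\perp$ and the stabilizer is not compact. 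This degenerate case is arguably a rough edge of the statement as quoted, but a complete proof of the converse must either exclude it or fold it into the $\lambda=0$ branch; your write-up passes over it silently.
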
 

What the above theorem says is that we can choose a Vogan diagram and algorithmically check the signs of $(\varphi_S,\gamma_i)$ for each simple non-compact root $\gamma_i$. If the signs are all the same, then the associated orbit admits special canonical almost-complex structure.

\subsection{Vogan diagrams}\label{sec::Vogandiagrams}

Vogan diagrams are combinatorial objects used to classify real semisimple Lie algebras \cite[Chapter VI]{Knapp1996}. In our setting, they will play an important role in studying the equation \eqref{eq::rho=lambdaomegaintermsoftau} on adjoint orbits of simple Lie groups, as explained in the previous sections.

A \emph{Vogan diagram with trivial automorphism} is a Dynkin diagram with some (including no one or all) painted vertices, where painted vertices correspond to simple non-compact roots. In the following, by Vogan diagram we will always intend a Vogan diagram with trivial automorphism. The reason why these diagrams are important in our setting is contained in the following lemma.

\begin{lem}\label{lem::correspVogandiagramwithv}
Let $G$ be a real semisimple Lie group with Lie algebra $\mathfrak g$, and let $\ell$ be the rank of $\mathfrak g$.
To any $v \in \mathfrak g$ with compact stabilizer, one can associate a Vogan diagram
and a vector $(v^1,\dots,v^\ell) \in \mathbf R^\ell$ with $v^i \geq 0$.
Moreover $v^i>0$ if the $i$-th node of the Vogan diagram is painted.
\end{lem}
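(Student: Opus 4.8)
The plan is to unwind the definitions accumulated in Sections~\ref{sec:adjointOrbits}--\ref{ssec:fundamentalDominantWeights} and assemble them into the desired correspondence. Starting from $v \in \mathfrak g$ with compact stabilizer $V$, first I would invoke the structure theory recalled in Section~\ref{sec::structureofg}: choose a maximal compact subalgebra $\mathfrak k$ with $\mathfrak v \subset \mathfrak k$, a maximal abelian $\mathfrak h_0 \subset \mathfrak k$ containing $v$, and pass to the complexification $\mathfrak h_c$ to obtain the root space decomposition $\mathfrak g_c = \mathfrak h_c \oplus \sum_{\alpha\in\Delta}\mathfrak g^\alpha$. Each root is then classified as compact or non-compact according to whether $\mathfrak g^\alpha \subset \mathfrak k_c$ or $\mathfrak p_c$, and this assignment of the coefficients $\varepsilon_\alpha$ to the simple roots is exactly the data of painting: a node is painted iff the corresponding simple root $\gamma_i$ has $\varepsilon_{\gamma_i}=1$. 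The underlying Dynkin diagram together with this painting is the Vogan diagram; this addresses the first half of the statement.

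For the vector $(v^1,\dots,v^\ell)$, the key point is the choice of positive root system made at the end of Section~\ref{sec::structureofg}: since $v \in \mathfrak h_0$, we have $iv \in \mathfrak h_{\mathbf R}$, so there is a unique $\varphi \in \mathfrak h_{\mathbf R}^*$ with $h_\varphi = -iv$, and one can always choose $\Delta_+$ so that $\varphi \in C$, the dominant Weyl chamber. Then I would expand $\varphi$ in the basis of fundamental dominant weights recalled in Section~\ref{ssec:fundamentalDominantWeights}, writing $\varphi = \sum_{i=1}^\ell v^i\varphi_i$; by Lemma~\ref{lem::vassumoffunddomweights} the condition $\varphi \in C$ (i.e. $(\varphi,\alpha)\geq 0$ for all $\alpha\in\Delta_+$) is equivalent to $v^i \geq 0$ for all $i$, which gives the required nonnegativity. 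This $(v^1,\dots,v^\ell)$ is the vector attached to $v$.

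It remains to prove the final assertion: if the $i$-th node is painted, then $v^i>0$. Suppose for contradiction that $\gamma_i$ is non-compact but $v^i=0$. By Lemma~\ref{lem::vassumoffunddomweights}, $\Delta_+ \setminus \varphi^\perp = \spann\{\gamma_j \mid v^j\neq 0\}\cap\Delta_+$, so $v^i=0$ forces $\gamma_i \in \varphi^\perp$, i.e. $\gamma_i \in \Delta_+ \cap \varphi^\perp$. But in Section~\ref{sec::definitionJ} it is observed, from the decomposition \eqref{eq:decompositionv} of $\mathfrak v$ and the compactness of $V$, that every root in $\Delta_+\cap\varphi^\perp$ is compact --- contradicting $\varepsilon_{\gamma_i}=1$. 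Hence $v^i>0$ whenever the $i$-th node is painted.

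I expect the main (and essentially only) obstacle to be bookkeeping rather than mathematical depth: one must be careful that the various choices --- of $\mathfrak k$, of $\mathfrak h_0$, and especially of $\Delta_+$ making $\varphi$ dominant --- can all be made simultaneously and compatibly, and that the painting obtained is independent of the residual freedom in these choices up to the equivalences of Vogan diagrams. All the genuine content has been packaged into the earlier lemmas (particularly Lemma~\ref{cor::epsilonalphaformula} for the compactness bookkeeping and Lemma~\ref{lem::vassumoffunddomweights} for the dominant-chamber/support statement), so the proof is chiefly a matter of citing \cite[Ch.~VI]{Knapp1996} for the existence of the structural choices and then stringing the definitions together.
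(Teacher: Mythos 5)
Your argument is correct and is essentially the same assembly of definitions that the paper relies on: the paper itself omits the proof (deferring to \cite{DellaVedovaGatti2022}), but Sections~\ref{sec::structureofg}--\ref{sec::definitionJ} contain exactly the ingredients you string together, including the key observation that every root in $\Delta_+\cap\varphi^\perp$ is compact, which is what makes your contradiction in the last step work. No gaps.
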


Since a Cartan subalgebra $\mathfrak h_0$ containing $v$ and a Weyl chamber $C$ containing $\varphi$ cannot be chosen in a canonical way, one has that it is possible to associate different Vogan diagrams to the same element $v$. However, once the Vogan diagram is chosen and the simple roots are labelled, the vector $(v^1,\dots,v^\ell)$ is uniquely determined.

The correspondence established by Lemma \ref{lem::correspVogandiagramwithv} can be reversed. Indeed, given a connected Vogan diagram one can find a positive root system $\Delta_+$, and determine which roots in $\Delta_+$ are compact by using the formula in lemma \ref{cor::epsilonalphaformula}. At this point, the Weyl Chamber associated to $\Delta_+$ is determined, hence one can choose $\varphi \in C$, $\varphi = \sum_{i=1}^\ell v^i \varphi_i$, with $v^i \geq 0$.

The important fact is that, given a non-compact real simple Lie group $G$, all adjoint orbits $(G/V,\omega,J)$ such that $\rho=\lambda\omega$, for some constant $\lambda\in\RR$, can be listed in an algorithmic way (up to isomorphism and scaling). Indeed, by what we said above, this is equivalent to list (up to scaling) all special $\varphi$'s for all possible connected Vogan diagrams.
In \cite{DellaVedovaGatti2022} the authors classified all such vectors for classical Vogan diagrams up to rank $\ell=4$ and all the exceptional ones. This bound on the rank was mainly due to the fact that the algorithm to compute them becomes very slow with high ranks, since it requires to compute the full root system of the simple Lie algebra, together with its compact and non-compact roots. In the next section we are going to write the condition of special vector purely in terms of the indices of simple non-compact roots of a Vogan diagram. This makes the problem combinatorial and allows to scale the computation to much bigger Vogan diagrams.

\section{The condition $\rho=\lambda\omega$ through the combinatorics of Vogan diagrams}\label{sec:specialCVD}

In this section we are going to study the condition $\rho=\lambda\omega$ in terms of the combinatorics of Vogan diagrams. In particular, we will improve the results in \cite{DellaVedovaGatti2022}, providing a purely combinatorial condition for a special vector. Note that we will not consider adjoint orbits of exceptional type, since they were fully classified in \cite{DellaVedovaGatti2022}. The code that we used to test the formulae is available at the GitHub repository \href{https://github.com/alga-hopf/special-vogan-diagrams}{https://github.com/alga-hopf/special-vogan-diagrams}.

Consider the Vogan diagram of a non-compact real semisimple Lie algebra $\mathfrak{g}$ with $S=\lrbr{i_1,\ldots,i_m}$ the set of indices of simple non-compact roots. The vector
\begin{equation}
\varphi=\eta-2\sum_{\alpha\in\spann\lrbr{\gamma_i \vert i\in S^c}}\alpha
\end{equation}
can be written as
\begin{align}
\varphi =&\eta-2\sum_{\alpha\in\spann\lrbr{\gamma_i \vert i\in S^c}}\alpha\\
=& -2\sum_{\alpha\in\Delta_+^{nc}}\alpha-2\sum_{\alpha\in\Delta_+^c}\alpha +4\sum_{\alpha\in\Delta_+^c}-2\sum_{\alpha\in\spann\lrbr{\gamma_i \vert i\in S^c}}\alpha \label{eq:phisp1}\\
=& -4\delta+4\sum_{\alpha\in\Delta_+^c}\alpha-2\sum_{\alpha\in\spann\lrbr{\gamma_i \vert i\in S^c}}\alpha \label{eq:phisp4},
\end{align}
where in \eqref{eq:phisp1} we used the definition of $\eta$ \eqref{eq:eta}, in \eqref{eq:phisp4} we used the definition of $\delta$ \eqref{eq: deltavector} and $\Delta_+^{nc}$, $\Delta_+^{c}$ denote the sets of positive non-compact and compact roots respectively. To understand whether the vector $\varphi$ is special, we have to compute the signs of its $i$th coefficients, $i\in S$, by theorem \ref{cor::specialvALL}. As explained in section \ref{ssec:fundamentalDominantWeights}, the coefficients $\xi_i$ of $\varphi$ in the basis of the fundamental dominant weights $\lrbr{\omega_1,\ldots,\omega_{\ell}}$ are given by
\begin{equation}
\xi_i=(A\varphi)_i\quad i\in S,
\end{equation}
where $A$ is the Cartan matrix of the underlying complex Lie algebra \eqref{eq:cartanMatrix}. Hence
\begin{equation}
\tilde{\varphi}=-4A\delta+4A\sum_{\alpha\in\Delta_+^c}\alpha-2A\sum_{\alpha\in\spann\lrbr{\gamma_i \vert i\in S^c}}\alpha,
\end{equation}
where $\tilde{\varphi}$ denotes $\varphi$ in the basis of fundamental dominant weights. Since, by definition, $\delta_i=1$ in the basis of the fundamental dominant weights, we have the following expression for the $\xi_i$'s.
\begin{lem}\label{eq:xiExpression}
In the basis of the fundamental dominant weights, the coefficients $\xi_i$'s can be expressed as
\begin{equation}
\xi_i=-4+4\left(A\sum_{\alpha\in\Delta_+^c}\alpha\right)_i-2\left(A\sum_{\alpha\in\spann\lrbr{\gamma_i\vert i\in S^c}}\alpha\right)_i,
\end{equation}
where $A$ is the Cartan matrix of the underlying Dynkin diagram.
\end{lem}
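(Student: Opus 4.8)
The plan is to obtain the formula by a single change of coordinates applied to the expression for $\varphi$ already derived in \eqref{eq:phisp4}. First I note that each of the three terms occurring there — $\delta$, the sum $\sum_{\alpha\in\Delta_+^c}\alpha$, and the sum $\sum_{\alpha\in\spann\lrbr{\gamma_i \vert i\in S^c}}\alpha$ — is an element of $\mathfrak h_{\mathbf R}^*$ lying in the root lattice, hence has a well-defined coordinate vector with respect to the simple root basis $\lrbr{\gamma_1,\dots,\gamma_\ell}$; so $\varphi$, being an integer combination of these, also does. Everything then reduces to understanding how coordinates transform under the passage from the simple root basis to the basis of fundamental dominant weights.

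The second step is to pin down that transformation. From the defining relation $\varphi_j=\sum_{i=1}^\ell (A^{-1})^{ij}\gamma_i$ and the definition \eqref{eq:cartanMatrix} of the Cartan matrix one extracts the pairing identity $(\varphi_j,\gamma_i)=\tfrac12(\gamma_i,\gamma_i)\,\delta_{ij}$. Consequently, if $\psi\in\mathfrak h_{\mathbf R}^*$ is written as $\psi=\sum_j\xi_j\varphi_j$ then $\xi_i=\tfrac{2(\psi,\gamma_i)}{(\gamma_i,\gamma_i)}$, and expanding $\psi$ instead in the simple root basis and using \eqref{eq:cartanMatrix} shows this equals $(A\psi)_i$, where on the right $\psi$ is identified with its simple-root coordinate vector. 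In other words, passing from simple-root coordinates to fundamental-weight coordinates is left multiplication by $A$; this recovers the identity $\xi_i=(A\varphi)_i$ recorded before the statement. Since this is a linear map, applying it to both sides of \eqref{eq:phisp4} gives
\begin{equation}
\tilde\varphi=-4A\delta+4A\sum_{\alpha\in\Delta_+^c}\alpha-2A\sum_{\alpha\in\spann\lrbr{\gamma_i \vert i\in S^c}}\alpha ,
\end{equation}
with $\tilde\varphi$ the fundamental-weight coordinate vector of $\varphi$.

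The final step is to evaluate the $\delta$-term. By \eqref{eq: deltavector} one has $\delta=\sum_{i=1}^\ell\varphi_i$, so the coordinates of $\delta$ in the fundamental dominant weight basis are all equal to $1$; equivalently $(A\delta)_i=1$ for every $i$. Substituting this into the previous display yields
\begin{equation}
\xi_i=\tilde\varphi_i=-4+4\left(A\sum_{\alpha\in\Delta_+^c}\alpha\right)_i-2\left(A\sum_{\alpha\in\spann\lrbr{\gamma_i \vert i\in S^c}}\alpha\right)_i ,
\end{equation}
which is exactly the asserted identity. By Theorem \ref{cor::specialvALL} only the indices $i\in S$ are relevant for deciding speciality, but the identity holds for all $i$.

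I do not expect any real obstacle: the whole argument is the bookkeeping of one linear change of basis, and the only points that need care are the convention for which side $A$ multiplies on (fixed by the pairing identity $(\varphi_j,\gamma_i)=\tfrac12(\gamma_i,\gamma_i)\delta_{ij}$) and the harmless distribution of a linear map over the finite sums appearing in \eqref{eq:phisp4}. The genuinely substantive parts of the story lie elsewhere — deriving \eqref{eq:phisp4} in the first place (done in the lines preceding the statement), and, in the subsequent sections, actually computing the vectors $A\sum_{\alpha\in\Delta_+^c}\alpha$ and $A\sum_{\alpha\in\spann\lrbr{\gamma_i \vert i\in S^c}}\alpha$ for each of the classical types $A_\ell$, $B_\ell$, $C_\ell$, $D_\ell$.
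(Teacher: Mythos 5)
Your argument is correct and is essentially the same as the paper's: the paper likewise derives the identity by rewriting $\varphi$ as in \eqref{eq:phisp4}, converting simple-root coordinates to fundamental-weight coordinates via left multiplication by the Cartan matrix ($\xi_i=(A\varphi)_i$), and using that $\delta=\sum_i\varphi_i$ has all coordinates equal to $1$ in the fundamental-weight basis. Your only addition is to verify the change-of-basis formula explicitly from the pairing $(\varphi_j,\gamma_i)=\tfrac12(\gamma_i,\gamma_i)\delta_{ij}$, which the paper takes for granted; this is a harmless (and correct) elaboration.
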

Then, by theorem \ref{cor::specialvALL}, the considered Vogan diagram is special if all the $\xi_i$'s, $i\in S$, have the same sign. In order to check the signs of the $\xi_i$'s we need to compute explicitly the coefficients $\xi_i$, $i\in S$, which is the goal of the subsequent sections.

We can now start diving into the study of the coefficients $\xi_{i}$'s for Vogan diagrams of type $A_{\ell}$. 


\subsection{$A_{\ell}$ family}\label{sec:anCoefficients}
In this section we provide a closed formula for the coefficients $\xi_i$ defined above. The arguments used in this section will be used also for other families of diagrams, thus we provide the details here without repeating them in other cases, if not necessary.

Consider a diagram of type $A_{\ell}$ with $S=\lrbr{i_1,\ldots,i_m}$ the set of indices of simple non-compact roots, and let $\{\gamma_1,\ldots,\gamma_{\ell}\}$ denote the simple roots. Note that, by definition,
\begin{equation}\label{eq: coeffAn}
\left(\sum_{\alpha\in\spann\lrbr{\gamma_i \vert i\in S^c}}\alpha\right)_i=0\quad \forall i\in S.
\end{equation}
Combining equation \eqref{eq: coeffAn} together with the definition of the Cartan matrix $A$ for Lie algebras of type $A_{\ell}$ \cite[Sec.~11.4, Tab.~1]{Humphreys1972} we get for $i_k\in S$, $1\leq k\leq m$,
\begin{equation}
\left( A\sum_{\alpha\in\spann\lrbr{\gamma_i \vert i\in S^c}}\alpha\right)_{i_k}=-\dime(\hh_{i_{k}})-\dime(\hh_{i_{k+1}}),
\end{equation}
where $\hh_{i_k}$ is the Cartan subalgebra of the Lie subalgebra included between the nodes $i_{k-1}+1$ and $i_{k}-1$. This follows from the fact that the roots are unbroken strings \cite[Sec.~9.4]{Humphreys1972} and that the roots of the smaller subalgebra of type $A$ included between $i_{k-1}+1$ and $i_{k}-1$ having $\alpha_{i_k-1}\neq 0$ are exactly $\sum_{i=n_1}^{i_k-1}\gamma_i$, $i_{k-1}<n_1<i_k$. Their number corresponds to the dimension of the corresponding Cartan subalgebra $\dime(\hh_{i_{k}})$. A similar argument holds for $\dime(\hh_{i_{k+1}})$. Moreover, we can write
\begin{equation}\label{eq:compactRootsSplit}
\sum_{\alpha\in\Delta_+^c}\alpha=\sum_{\alpha\in\spann\lrbr{\gamma_i\vert i\in S^c}}\alpha+\sum_{\alpha\in\Delta_+^c\setminus\spann\lrbr{\gamma_i \vert i\in S^c}}\alpha,
\end{equation}
and we have
\begin{align}
\left(A\sum_{\alpha\in\Delta_+^c}\alpha\right)_{i_k}=& \left(A\sum_{\alpha\in\spann\lrbr{\gamma_i \vert i\in S^c}}\alpha\right)_{i_k}+\left(A\sum_{\alpha\in\Delta_+^c\setminus\spann\lrbr{\gamma_i \vert i\in S^c}}\alpha\right)_{i_k}\\
=&-\dime(\hh_{i_{k}})-\dime(\hh_{i_{k+1}})+\left(A\sum_{\alpha\in\Delta_+^c\setminus\spann\lrbr{\gamma_i \vert i\in S^c}}\alpha\right)_{i_k}\\
=& -\dime(\hh_{i_{k}})-\dime(\hh_{i_{k+1}})+\left(A\delta^c\right)_{i_k}, \label{eq:acompactroots}
\end{align}
where $\delta^c=\sum_{\alpha\in\Delta_+^c\setminus\spann\lrbr{\gamma_i\vert i\in S^c}}\alpha$.

Putting together equation \eqref{eq:acompactroots} and lemma \ref{eq:xiExpression} we get
\begin{align}\label{eq: phiCoeff}
\xi_{i_k}=&2\left(-2-\dime(\hh_{i_{k}})-\dime(\hh_{i_{k+1}})+2\left(A\delta^c\right)_{i_k}\right)\\
=&2\left(-2-(i_{k}-i_{k-1}-1)-(i_{k+1}-i_k-1)+2(-\delta^c_{i_k-1}+2\delta^c_{i_k}-\delta^c_{i_k+1})\right) \label{eq: equality3an}\\
=&2\left(i_{k-1}-i_{k+1}+2(-\delta^c_{i_k-1}+2\delta^c_{i_k}-\delta^c_{i_k+1})\right) \label{eq: equality4an},
\end{align}
where in equality \eqref{eq: equality3an} we used the fact that $\dime(\hh_{i_{l}})=i_l-i_{l-1}-1$, with the assumptions that $i_0=0$ and $i_{m+1}=\ell+1$, and the structure of $A$.

At this point, in order to determine the coefficient $\xi_{i_k}$, we need to make the term $-\delta^c_{i_k-1}+2\delta^c_{i_k}-\delta^c_{i_k+1}$, $i_k\in S$, explicit. This requires some knowledge about the roots systems of Lie algebras of type $A_{\ell}$. In particular, the positive roots of Lie algebras type $A_{\ell}$ are
\begin{equation}
\Delta_+=\left\{ \sum_{i=n_1}^{n_2}\gamma_i\ \vert\ 1\leq n_1\leq n_2\leq \ell\right\},
\end{equation}
while the positive compact roots are of the form
\begin{equation}
\Delta^c_+=\left\{ \sum_{i=n_1}^{n_2}\gamma_i\ \vert\ 1\leq n_1\leq n_2\leq \ell,\ \lvert S\cap\{n_1,\ldots,n_2\}\rvert\in 2\ZZ\right\}.
\end{equation}
Before stating the theorem about the coefficients $\xi_i$ we introduce the following quantities that will be used extensively through the paper:
\begin{align}
S_1(j):=&\sum_{k=1}^{\lceil\frac{m -j}{2} \rceil}i_{j+2k}-i_{j+2k-1},\quad S_2(j):=\sum_{k=1}^{\lfloor\frac{j}{2} \rfloor} i_{j-2k+1}-i_{j-2k} \\
T_1(j):=&\sum_{k=1}^{\lfloor\frac{j+1}{2} \rfloor-1} i_{j-2k}-i_{j-2k-1},\quad T_2(j):=\sum_{k=1}^{\lceil\frac{m -j-1}{2} \rceil}i_{j+2k+1}-i_{j+2k}\\
S_+(j):=&\sum_{k=j+1}^m 2(-1)^{k-j}i_k +i_{j+1}+(-1)^{m-j+1}(\ell+1)=S_1(j)-T_2(j) \label{eq: s+}\\
S_-(j):=&\sum_{k=1}^{j-1} 2(-1)^{k-j}i_k+i_{j-1}=S_2(j)-T_1(j). \label{eq: s-}
\end{align}
This notation is going to greatly simplify the formulae we are going to present.

\begin{thm}\label{thm: ancoeff}
Given a Vogan diagram of type $A_{\ell}$ with $S=\lrbr{i_1,\ldots,i_m}$ the set of indices of simple non-compact roots, for $i_j\in S$ one has
\begin{equation}\label{eq:xiAn}
\xi_{i_j}=2\left(i_{j-1}-i_{j+1}+2\left(S_+(j)-S_-(j)\right)\right).
\end{equation}
\end{thm}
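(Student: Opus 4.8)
The plan is to start from the formula for $\xi_{i_k}$ already derived in equation \eqref{eq: equality4an}, namely
\[
\xi_{i_k}=2\left(i_{k-1}-i_{k+1}+2\left(-\delta^c_{i_k-1}+2\delta^c_{i_k}-\delta^c_{i_k+1}\right)\right),
\]
and to show that the combination $-\delta^c_{i_k-1}+2\delta^c_{i_k}-\delta^c_{i_k+1}$ equals $S_+(k)-S_-(k)$. Here $\delta^c=\sum_{\alpha\in\Delta_+^c\setminus\spann\{\gamma_i\,|\,i\in S^c\}}\alpha$, so $\delta^c_j$ is the number of positive compact roots $\sum_{i=n_1}^{n_2}\gamma_i$ (with $\{n_1,\ldots,n_2\}\cap S\neq\emptyset$) whose support contains index $j$; equivalently, writing such a root as the interval $[n_1,n_2]$, it is counted in $\delta^c_j$ precisely when $n_1\le j\le n_2$ and $|S\cap[n_1,n_2]|$ is a positive even number.

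First I would compute $\delta^c_{i_k}$ by a direct counting argument: intervals $[n_1,n_2]$ containing $i_k$ are parametrized by the choice of left endpoint $n_1\le i_k$ and right endpoint $n_2\ge i_k$, and the parity condition $|S\cap[n_1,n_2]|\in 2\mathbb Z$ couples the number of non-compact indices to the left of $i_k$ (inclusive) with the number to the right. Grouping the left endpoints according to which ``gap'' $i_{r-1}<n_1\le i_r$ they fall in contributes a factor $i_r-i_{r-1}$ (or $i_r$ when $r=1$, using the convention $i_0=0$), and similarly the right endpoints grouped by $i_s\le n_2<i_{s+1}$ contribute $i_{s+1}-i_s$ (or $\ell+1-i_s$ when $s=m$, using $i_{m+1}=\ell+1$); then one sums over all pairs $(r,s)$ with $r\le k\le s$ subject to $s-r$ being such that the count $s-r+1$ (or the appropriate parity) is even. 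This bookkeeping is exactly what the quantities $S_1,S_2,T_1,T_2$ package, and the alternating-sign telescoping in the definitions \eqref{eq: s+}--\eqref{eq: s-} of $S_+(k),S_-(k)$ is designed to absorb it. The key identity $S_+(j)=S_1(j)-T_2(j)$ and $S_-(j)=S_2(j)-T_1(j)$ stated in the excerpt should be verified (or used) to rewrite the double sum as $S_+(k)-S_-(k)$ plus the boundary terms.

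Next I would compute $\delta^c_{i_k-1}$ and $\delta^c_{i_k+1}$ by the same method. The crucial simplification is that $i_k-1$ and $i_k+1$ are \emph{not} in $S$ (the indices in $S$ are distinct and, when $i_k\pm1\in S$ the corresponding intervals are handled by the same formula with adjacent $S$-indices), so the parity condition for an interval through $i_k-1$ differs from the one through $i_k$ only in whether $i_k$ itself is included, and likewise on the other side. Taking the combination $-\delta^c_{i_k-1}+2\delta^c_{i_k}-\delta^c_{i_k+1}$ should then cause massive cancellation: most of the double-sum over pairs $(r,s)$ cancels, leaving only the ``diagonal'' contributions where an endpoint is pinned near $i_k$, which reassemble into $S_+(k)-S_-(k)$. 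Substituting back into \eqref{eq: equality4an} gives \eqref{eq:xiAn}.

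The main obstacle will be the careful treatment of the parity constraint and the boundary conventions ($i_0=0$, $i_{m+1}=\ell+1$, and the edge cases $j=1$ and $j=m$, where one of $S_+,S_-$ degenerates), together with keeping track of the $\lceil\cdot\rceil$ versus $\lfloor\cdot\rfloor$ in the summation limits of $S_1,S_2,T_1,T_2$ when $m-j$ is even versus odd. In other words, the conceptual content is a straightforward lattice-point count, but verifying that the alternating sums telescope correctly into the stated closed form — and that the $i_k\pm1$ terms cancel as claimed regardless of the parity of $m-k$ — is where the real care is needed. I would organize this by first doing the generic interior case and then separately checking that the formulae specialize correctly at $j=1$ and $j=m$.
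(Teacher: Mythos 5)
Your plan is essentially the paper's own argument: the paper also starts from \eqref{eq: equality4an} and evaluates $-\delta^c_{i_j-1}+2\delta^c_{i_j}-\delta^c_{i_j+1}$ by observing that the second difference telescopes to counts $s_1,t_1,s_2,t_2$ of compact interval-roots with one endpoint pinned at $i_j$, $i_j-1$, or $i_j\pm1$, each of which is then grouped by the gaps between consecutive elements of $S$ to give $S_1(j),T_1(j),S_2(j),T_2(j)$. Your variant of first computing the full double sum for $\delta^c_{i_k}$ and then cancelling is just a more laborious ordering of the same bookkeeping, and the ``endpoint-pinned'' residue you predict after cancellation is exactly the paper's $s_1-t_1+s_2-t_2$ decomposition.
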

\begin{proof}
First, assume that $i_0=0$ and $i_{m+1}=\ell+1$. We are going to prove that
\begin{equation}\label{eq: deltacThm}
-\delta_{i_j-1}^c+2\delta^c_{i_j}-\delta_{i_j+1}^c= S_1(j)+S_2(j) -T_1(j)-T_2(j).
\end{equation}
This, together with equality \eqref{eq: equality4an} and quantities \ref{eq: s+}, \eqref{eq: s-}, proves identity \eqref{eq:xiAn}.

Note that we can express
\begin{equation}
\delta^c_{i_j-1}=\delta^c_{i_j}-s_1+t_1,\quad \delta^c_{i_j+1}=\delta^c_{i_j}-s_2+t_2,
\end{equation}
where 
\begin{itemize}
\item $s_1$ is the number of compact roots $\sum_{i=1}^{n}\alpha_i\gamma_i$ in $\Delta_+^c\setminus\spann\lrbr{\gamma_i \vert i\in S^c}$ such that \\$\alpha_{i_j}-\alpha_{i_j-1}=1$,
\item $s_2$ is the number of compact roots in $\Delta_+^c\setminus\spann\lrbr{\gamma_i \vert i\in S^c}$ such that $\alpha_{i_j}-\alpha_{i_j+1}=1$,
\item $t_1$ is the number of compact roots in $\Delta_+^c\setminus\spann\lrbr{\gamma_i \vert i\in S^c}$ such that $\alpha_{i_j-1}-\alpha_{i_j}=1$,
\item $t_2$ is the number of compact roots in $\Delta_+^c\setminus\spann\lrbr{\gamma_i \vert i\in S^c}$ such that $\alpha_{i_j+1}-\alpha_{i_j}=1$.
\end{itemize}
Hence
\begin{equation}
-\delta_{i_j-1}^c+2\delta^c_{i_j}-\delta_{i_j+1}^c=s_1-t_1+s_2-t_2.
\end{equation}
So it suffices to determine the numbers $s_1,s_2,t_1,t_2$ for each $i_j$. We start with the coefficient $s_1$ and we need to count the compact roots in $\Delta_+^c\setminus\spann\lrbr{\gamma_i \vert i\in S^c}$ such that $\alpha_{i_j}=1$ and $\alpha_{i_j-1}=0$. These are of the form $\sum_{l=i_j}^{n_1}\gamma_l$, with $S\cap|\lrbr{i_j,\ldots,n_1}|\in 2\ZZ$. These roots are 
\begin{equation}
s_1=\sum_{k=1}^{\lceil\frac{m -j}{2} \rceil}i_{j+2k}-i_{j+2k-1}=S_1(j).
\end{equation}
A similar argument can be used to compute $s_2$, and we need to compute the the compact roots in $\Delta_+^c\setminus\spann\lrbr{\gamma_i \vert i\in S^c}$ such that $\alpha_{i_j}=1$ and $\alpha_{i_j+1}=0$, which are of the form $\sum_{l=n_1}^{i_j}\gamma_l$, with $S\cap|\lrbr{n_1,\ldots,i_j}|\in 2\ZZ$. 
These are
\begin{equation}
s_2=\sum_{k=1}^{\lfloor\frac{j}{2} \rfloor} i_{j-2k+1}-i_{j-2k}=S_2(j).
\end{equation}
For the coefficient $t_1$, the compact roots in $\Delta_+^c\setminus\spann\lrbr{\gamma_i \vert i\in S^c}$ such that $\alpha_{i_j}=0$ and $\alpha_{i_j-1}=1$ are of the form $\sum_{l=n_1}^{i_j-1}\gamma_l$, with $S\cap|\lrbr{n_1,\ldots,i_j-1}|\in 2\ZZ$. 
This reads
\begin{equation}
t_1=\sum_{k=1}^{\lfloor\frac{j+1}{2} \rfloor-1} i_{j-2k}-i_{j-2k-1}=T_1(j).
\end{equation}
Finally, for $t_2$ the compact roots in $\Delta_+^c\setminus\spann\lrbr{\gamma_i \vert i\in S^c}$ such that $\alpha_{i_j}=0$ and $\alpha_{i_j+1}=1$ are of the form $\sum_{l=i_j+1}^{n_2}\gamma_l$, with $S\cap|\lrbr{i_j+1,\ldots,n_1}|\in 2\ZZ$. So
\begin{equation}
t_2=\sum_{k=1}^{\lceil\frac{m -j-1}{2} \rceil}i_{j+2k+1}-i_{j+2k}=S_2(j).
\end{equation}
Thus,
\begin{equation}
s_1-t_1+s_2-t_2=S_1(j)-T_1(j)+S_2(j)-T_2(j).
\end{equation}
This proves equality \eqref{eq: deltacThm}
\end{proof}

Theorem \ref{thm: ancoeff} above gives a formula to compute the coefficients $\xi_i$ that depends only on the set $S$ and its combinatorics. In the next sections we are going to state similar results to the one for $A_{\ell}$. Even if the root systems are more intricate, the fundamental ideas are the ones developed in the current section.


\subsection{$B_{\ell}$ family}\label{sec:bnCoefficients}

In this section we are going to study the coefficients $\xi_i$ for Vogan diagrams of type $B_{\ell}$.  Consider a Vogan diagram of type $B_{\ell}$ with $S=\lrbr{i_1,\ldots,i_m}$ the set of indices of simple non-compact roots, and denote $\{\gamma_1,\ldots,\gamma_{\ell}\}$ the set of simple roots. As we did in section \ref{sec:anCoefficients}, taking into account the form of the Cartan matrix $A$ of type $B_{\ell}$ \cite[Sec.~11.4,Tab.~1]{Humphreys1972}, one has
\begin{equation}
\begin{split}
\Gamma_{i_k}=&\left( A\sum_{\alpha\in\spann\lrbr{\gamma_i \vert i\in S^c}}\alpha\right)_{i_k} \\
=&\left\{ \begin{array}{lcl}
-\dime(\hh_{i_{k}})-\dime(\hh_{i_{k+1}}) & \text{if} &k\neq m\\
-\dime(\hh_{i_{m}})-(2\dime(\hh_{i_{m+1}})-1)& \text{if} &k= m, i_k\neq \ell \\
-2\dime(\hh_{\ell}) & \text{if} & $k=m$, i_k= \ell \end{array}\right. \\
=&\left\{ \begin{array}{lcl}
i_{k-1}-i_{k+1}+2 & \text{if} &k\neq m\\
2+i_m+i_{m-1}-2n & \text{if} &k= m, i_k\neq \ell \\
-2n+2i_{m-1}+2 & \text{if} & k=m, i_k= \ell \end{array}\right.,
\end{split}
\end{equation}
where we assume $i_0=0$ and $i_{m+1}=\ell+1$. The case $k= m, i_k\neq \ell$ comes from the fact that the roots in $\spann\lrbr{\gamma_i \vert i\in S^c}$ that have non-zero $i_m+1$ component are the roots of the Lie algebra of type $B$ included between $i_m+1$ and $\ell$, which are exactly $2\dime(\hh_{i_{k+1}})-1$. The case $k=m, i_k= \ell$ comes from the structure of the Cartan matrix $A$ and the fact that the Lie algebra included between $i_{m-1}+1$ and $i_{m}-1$ is of type $A_{n}$.

By equation \eqref{eq:compactRootsSplit}, similarly as for the $A_{\ell}$ case, we have
\begin{equation}
\begin{split}
\left(A\sum_{\alpha\in\Delta_+^c}\alpha\right)_{i_k}=& \left(A\sum_{\alpha\in\spann\lrbr{\gamma_i \vert i\in S^c}}\alpha\right)_{i_k} + \left(A\sum_{\alpha\in\Delta_+^c\setminus\spann\lrbr{\gamma_i \vert i\in S^c}}\alpha\right)_{i_k}\\
=&\Gamma_{i_k}+\tau_{i_k},\\
\end{split}
\end{equation}
where $\delta^c=\sum_{\alpha\in\Delta_+^c\setminus\spann\lrbr{\gamma_i\vert i\in S^c}}\alpha$ and we put $\tau_{i_k}:=\left(A\delta^c\right)_{i_k}$. In particular,
\begin{equation}
\tau_k=\left\{\begin{array}{lcl}
-\delta^c_{i_k-1}+2\delta^c_{i_k}-\delta^c_{i_k+1} & \text{if} & i_k\neq \ell \\
-2\delta^c_{i_k-1}+2\delta^c_{i_k} & \text{if} & i_k=\ell
\end{array}\right.,
\end{equation}
by the structure of the Cartan matrix $A$.

Finally, as for the $A_{\ell}$ case,
\begin{equation}\label{eq: comformulabn}
\xi_{i_k}=-4-2\Gamma_{i_k}+4\left( A\sum_{\alpha\in\Delta_+^c}\alpha\right)_{i_k}=2\left(-2+\Gamma_{i_k}+2\tau_{i_k}\right).
\end{equation}
In order to write explicitly the coefficients $\tau_{i_k}$, we need to recall that the structure of the positive roots of type $B_{\ell}$,
\begin{equation}
\Delta_+=\left\{ \sum_{i=n_1}^{n_2}\gamma_i\ \vert\ 1\leq n_1\leq n_2\leq \ell\right\}\cup \left\{ \sum_{i=n_1}^{n_2}\gamma_i+\sum_{i=n_2+1}^{\ell}2 \gamma_i\ \vert\ 1\leq n_1\leq n_2\leq \ell-1\right\},
\end{equation}
and the positive compact roots
\begin{equation}
\begin{split}
\Delta^c_+=&\left\{ \sum_{i=n_1}^{n_2}\gamma_i\ \vert\ 1\leq n_1\leq n_2\leq \ell,\ \lvert S\cap\{n_1,\ldots,n_2\}\rvert \in 2\ZZ\right\}\cup \\ & \cup\left\{ \sum_{i=n_1}^{n_2}\gamma_i+\sum_{i=n_2+1}^{\ell}2 \gamma_i\ \vert\ 1\leq n_1\leq n_2\leq \ell-1,\ \lvert S\cap\{n_1,\ldots,n_2\}\rvert\in 2\ZZ\right\}.
\end{split}
\end{equation}

In the following theorem we compute the coefficients $\xi_i$.
\begin{thm}
Given a Vogan diagram of type $B_{\ell}$ with $S=\lrbr{i_1,\ldots,i_m}$ the set of indices of simple non-compact roots, for $i_j\in S$ one has
\begin{itemize}
\item If $j\neq m$
\begin{equation}\label{eq:bCoeff1}
\xi_{i_j}=2(-i_{j-1}+4i_j-3i_{j+1}+4S_+(j)+2(-1)^{j+m}).
\end{equation}
\item If $j= m$ and $i_m\neq \ell$
\begin{equation}\label{eq:bCoeff2}
\xi_{i_m}= 2(3i_m-i_{m-1}-2\ell-2).
\end{equation}
\item If $i_j=\ell$
\begin{equation}\label{eq:bCoeff3}
\xi_{\ell}= 2(2\ell-2i_{m-1}).
\end{equation}
\end{itemize}
\end{thm}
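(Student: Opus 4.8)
The plan is to mirror the strategy already carried out for the $A_\ell$ family in Theorem~\ref{thm: ancoeff}, namely to evaluate the quantities $\tau_{i_j} = -\delta^c_{i_j-1}+2\delta^c_{i_j}-\delta^c_{i_j+1}$ (or $-2\delta^c_{i_j-1}+2\delta^c_{i_j}$ when $i_j=\ell$) by a root-counting argument, and then substitute into the already-derived identity $\xi_{i_k}=2(-2+\Gamma_{i_k}+2\tau_{i_k})$ of \eqref{eq: comformulabn}, using the explicit values of $\Gamma_{i_k}$ computed just above. The three cases in the statement correspond exactly to the three cases in the formula for $\Gamma_{i_k}$, so the bookkeeping is organized the same way: (i) $j\neq m$, (ii) $j=m$ with $i_m\neq\ell$, (iii) $i_j=\ell$.

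First I would write $\tau_{i_j}=s_1-t_1+s_2-t_2$ exactly as in the $A_\ell$ proof, where $s_1$ counts compact roots in $\Delta_+^c\setminus\spann\{\gamma_i\mid i\in S^c\}$ with $\alpha_{i_j}=1,\alpha_{i_j-1}=0$, and similarly for $s_2,t_1,t_2$. The new feature of type $B_\ell$ is that $\Delta_+$ now contains two families of roots: the ``short-string'' roots $\sum_{i=n_1}^{n_2}\gamma_i$ and the ``long'' roots $\sum_{i=n_1}^{n_2}\gamma_i+\sum_{i=n_2+1}^{\ell}2\gamma_i$. For a fixed interior node $i_j$ (so $j\neq m$ and $i_j\neq\ell$), a root has $\alpha_{i_j}=1,\alpha_{i_j-1}=0$ iff it is a short root of the form $\sum_{l=i_j}^{n}\gamma_l$ with $n\le \ell$ and $|S\cap\{i_j,\dots,n\}|$ even; the long roots are excluded here because they all have $\alpha_{i_j}\in\{1,2\}$ and whenever $\alpha_{i_j}=1$ one forces $\alpha_{i_j-1}=1$ too. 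So $s_1$ is again the same alternating telescoping sum $S_1(j)$ as in the $A_\ell$ case, but with the subtlety that the endpoint contributions now also see the extra long roots ending at the tail; this is precisely where the term $i_{j+1}$ (vs.\ $i_{j-1}$) and the parity sign $(-1)^{j+m}$ in \eqref{eq:bCoeff1} will come from, via $S_+(j)=S_1(j)-T_2(j)$ together with the $\Gamma_{i_j}=i_{j-1}-i_{j+1}+2$ contribution. I would carefully enumerate, for each of $s_1,s_2,t_1,t_2$, which short and which long roots contribute, re-derive the telescoping sums, and collect. For case (ii) $j=m$, $i_m\neq\ell$, the sums $s_2,t_1$ only involve nodes $<i_m$ and are computed as before, while $s_1,t_2$ involve the tail $\{i_m+1,\dots,\ell\}$ which contains \emph{no} elements of $S$, so the counts collapse to simple closed expressions in $i_m,i_{m-1},\ell$; combined with $\Gamma_{i_m}=2+i_m+i_{m-1}-2\ell$ this yields \eqref{eq:bCoeff2}. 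For case (iii) $i_j=\ell$, one has $\tau_\ell=-2\delta^c_{\ell-1}+2\delta^c_\ell$, so I only need $s_1$ and $t_1$ type counts at the last node, again over a tail with no painted vertices, and $\Gamma_\ell=-2\ell+2i_{m-1}+2$ gives \eqref{eq:bCoeff3}.

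The main obstacle I expect is getting the endpoint/boundary contributions right in the presence of the long roots — in particular, correctly determining at each of the nodes $i_j-1,i_j,i_j+1$ which long roots have coefficient jump equal to $1$ there (the coefficient pattern of a long root jumps from $1$ to $2$ exactly once, at the node $n_2$, and is constant $2$ on the tail), and then showing that the resulting alternating sums reorganize into the compact quantities $S_1,S_2,T_1,T_2$ and hence $S_+(j)$. I would sanity-check the final formulae against the small-rank classification in \cite{DellaVedovaGatti2022} and against the code in the linked repository, and I would double-check the parity convention (the ``even intersection with $S$'' condition) propagates correctly through the telescoping, since an off-by-one in parity would flip the sign $(-1)^{j+m}$. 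Once $\tau_{i_j}$ is pinned down in each case, substitution into \eqref{eq: comformulabn} is purely mechanical.
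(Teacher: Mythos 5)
Your overall strategy coincides with the paper's: write $\tau_{i_j}=s_1-t_1+s_2-t_2$, count the contributing compact roots in $\Delta_+^c\setminus\spann\lrbr{\gamma_i\,\vert\, i\in S^c}$, and substitute into $\xi_{i_k}=2(-2+\Gamma_{i_k}+2\tau_{i_k})$ together with the explicit $\Gamma_{i_k}$. However, the one concrete enumeration you commit to in case (i) is wrong, and it is wrong exactly at the new feature of type $B_\ell$. You assert that a compact root with $\alpha_{i_j}=1$, $\alpha_{i_j-1}=0$ must be a short root because ``whenever $\alpha_{i_j}=1$ one forces $\alpha_{i_j-1}=1$ too'' for the long roots. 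This is false: the long root $\sum_{l=i_j}^{n_2}\gamma_l+\sum_{l=n_2+1}^{\ell}2\gamma_l$ with $n_2\geq i_j$ has coefficient $1$ at $i_j$ and $0$ at $i_j-1$. These roots contribute a second copy of $S_1(j)$ (minus a boundary Kronecker delta), which is why the correct count begins with $2S_1(j)$ rather than $S_1(j)$. Moreover, because coefficients now take the value $2$, the identity $\tau_{i_j}=s_1-t_1+s_2-t_2$ only holds if $s_1$ counts \emph{all} roots with $\alpha_{i_j}-\alpha_{i_j-1}=1$, which includes the pattern $(\alpha_{i_j},\alpha_{i_j-1})=(2,1)$ realized by $\sum_{l=n_1}^{i_j-1}\gamma_l+\sum_{l=i_j}^{\ell}2\gamma_l$; these contribute an additional $T_1(j)+i_j-i_{j-1}-1$. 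With your enumeration one gets $s_1=S_1(j)$ in place of the correct $s_1=2S_1(j)+T_1(j)+i_j-i_{j-1}-1-\delta_{\ell+1,i_{j+2\lceil (m-j)/2\rceil}}$ (and symmetrically for $t_2$), so carrying out the plan as written would not reproduce \eqref{eq:bCoeff1}.

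Your closing paragraph does note that a long root's coefficient jumps from $1$ to $2$ at a single node and that this requires care, but that observation contradicts the enumeration in the body of your argument rather than repairing it; as it stands the proposal would need the case-(i) counts redone from scratch. The outlines of cases (ii) and (iii) are essentially right, though for the right reason note that in case (ii) the roots starting at $i_m$ drop out of $s_1$ not because the tail is unpainted per se, but because $\lvert S\cap\{i_m,\dots,n_2\}\rvert=1$ makes them non-compact, leaving only the $(2,1)$-type long roots; the final substitution into \eqref{eq: comformulabn} is then mechanical, as you say.
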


\begin{proof}
Assume that $i_0=0$ and $i_{m+1}=\ell+1$. 
\begin{itemize}
\item Case $j\neq m$. We are going to prove that 
\begin{equation}\label{eq: deltac_Bn1}
\tau_{i_j}= -i_{j-1}+2i_j-i_{j+1}+  2S_1(j)-2T_2(j)+(-1)^{j+m}.
\end{equation}
This, together with equality \eqref{eq: comformulabn} proves equality \eqref{eq:bCoeff1}. In this case, $\tau_{i_j}$ can be written as
\begin{equation}
\tau_{i_j}=-\delta^c_{i_k-1}+2\delta^c_{i_k}-\delta^c_{i_k+1}=s_1-t_1+s_2-t_2,
\end{equation}
as for the $A_{\ell}$ case. We start with the computation of $s_1$. In order to count the compact roots in $\Delta_+^c\setminus\spann\lrbr{\gamma_i \vert i\in S^c}$ such that $\alpha_{i_j}-\alpha_{i_j-1}=1$, one can first consider the roots such that $\alpha_{i_j}=1$ and $\alpha_{i_j-1}=0$, which are of two types. The roots $\sum_{l=i_j}^{n_2}\gamma_l$, with $|S\cap\lrbr{i_j,\ldots,n_2}|\in 2\ZZ$, and $\sum_{l=i_j}^{n_1}\gamma_l+\sum_{l=n_1+1}^{\ell}2\gamma_l$, with $\lvert S\cap\{i_j,\ldots,n_1\}\rvert\in 2\ZZ$, that are exactly
\begin{equation}
\sum_{k=1}^{\lceil\frac{m -j}{2} \rceil}i_{j+2k}-i_{j+2k-1}+\sum_{k=1}^{\lceil\frac{m -j}{2} \rceil}i_{j+2k}-i_{j+2k-1}-\delta_{\ell+1,i_{j+2\lceil\frac{m -j}{2} \rceil}}=2S_1(j)-\delta_{\ell+1,i_{j+2\lceil\frac{m -j}{2} \rceil}}.
\end{equation}
Finally, we have to consider all the roots for which $\alpha_{i_j}=2$ and $\alpha_{i_j-1}=1$, that are of the form $\sum_{l=n_1}^{i_{j-1}}\gamma_l+\sum_{l=i_j}^{\ell}2\gamma_l$, with $\lvert S\cap\{n_1,\ldots,i_{j-1}\}\rvert\in 2\ZZ$. These are
\begin{equation}
\sum_{k=1}^{\lfloor\frac{j+1}{2} \rfloor-1} i_{j-2k}-i_{j-2k-1}+(i_j-i_{j-1}-1)=T_1(j)+i_j-i_{j-1}-1.
\end{equation}
So
\begin{equation}
s_1=2S_1(j)+T_1(j)+i_j-i_{j-1}-1-\delta_{\ell+1,i_{j+2\lceil\frac{m -j}{2} \rceil}}.
\end{equation}
For the coefficients $s_2$ and $t_1$ we can follow the same steps made for the $A_{\ell}$ case, so
\begin{align}
s_2=&\sum_{k=1}^{\lfloor\frac{j}{2} \rfloor} i_{j-2k+1}-i_{j-2k}=S_2(j),\\
t_1=&\sum_{k=1}^{\lfloor\frac{j+1}{2} \rfloor-1} i_{j-2k}-i_{j-2k-1}=T_1(j).
\end{align}
Finally, for $t_2$ one has that the roots in $\Delta_+^c\setminus\spann\lrbr{\gamma_i \vert i\in S^c}$ such that $\alpha_{i_j+1}-\alpha_{i_j}=1$ are the following. First we have the roots such that $\alpha_{i_j+1}=1$ and $\alpha_{i_j}=0$, that are of two types: the roots $\sum_{l=i_j+1}^{n_1}\gamma_l$, with $|S\cap\lrbr{i_j+1,\ldots,n_1}|\in 2\ZZ$, and $\sum_{l=i_j+1}^{n_1}\gamma_l+\sum_{l=n_1+1}^{\ell}2\gamma_l$, with $\lvert S\cap\{i_j+1,\ldots,n_1\}\rvert\in 2\ZZ$. These are
\begin{multline}
\sum_{k=1}^{\lceil\frac{m -j-1}{2} \rceil}i_{j+2k+1}-i_{j+2k}+\sum_{k=1}^{\lceil\frac{m -j-1}{2} \rceil}i_{j+2k+1}-i_{j+2k}+(i_{j+1}-i_j-1)+ \\-\delta_{\ell+1,i_{j+2\lceil\frac{m -(j+1)}{2} \rceil+1}}
=2T_2(j)+i_{j+1}-i_j-1-\delta_{\ell+1,i_{j+2\lceil\frac{m -(j+1)}{2} \rceil+1}}.
\end{multline}
Then there are the compact roots in $\Delta_+^c\setminus\spann\lrbr{\gamma_i \vert i\in S^c}$ such that $\alpha_{i_j+1}=2$ and $\alpha_{i_j}=1$, that are of the form $\sum_{l=n_1}^{i_j}\gamma_l+\sum_{l=i_j+1}^{\ell}2\gamma_l$, with $\lvert S\cap\{n_1,\ldots,i_j\}\rvert\in 2\ZZ$. These are
\begin{equation}
\sum_{k=1}^{\lfloor\frac{j}{2} \rfloor} i_{j-2k+1}-i_{j-2k}=S_2(j).
\end{equation}
Thus,
\begin{equation}
t_2=2T_2(j)+S_2(j)+i_{j+1}-i_j-1-\delta_{\ell+1,i_{2\lceil\frac{m -(j+1)}{2} \rceil+1}}.
\end{equation}
Putting everything together we get
\begin{equation}
s_1+s_2-t_1-t_2=  -i_{j-1}+2i_j-i_{j+1}+2S_1+(j)-2T_2(j)+(-1)^{j+m},
\end{equation}
where we used the fact that $-\delta_{\ell+1,i_{j+2\lceil\frac{m -j}{2} \rceil}}+\delta_{\ell+1,i_{j+2\lceil\frac{m -(j+1)}{2} \rceil+1}}=(-1)^{j+m}$.
This proves equality \eqref{eq: deltac_Bn1}.

\item Case $j=m$, $i_m\neq \ell$. We will prove that
\begin{equation}\label{eq: deltac_Bn2}
\tau_{i_m}= i_j-i_{j-1}-1,
\end{equation}
that, together with equality \eqref{eq: comformulabn}, proves equality \eqref{eq:bCoeff2}. 
Again, we have
\begin{equation}
\tau_{i_m}=-\delta^c_{i_m-1}+2\delta^c_{i_m}-\delta^c_{i_m+1}=s_1-t_1+s_2-t_2,
\end{equation}
so we only need to compute the coefficients $s_1,t_1,s_2,t_2$. We start with the coefficient $s_1$. Notice that the compact roots in $\Delta_+^c\setminus\spann\lrbr{\gamma_i \vert i\in S^c}$ such that $\alpha_{i_m}-\alpha_{i_m-1}=1$, are only the roots such that $\alpha_{i_m}=2$ and $\alpha_{i_m-1}=1$, that are of type $\sum_{l=n_1}^{i_{m-1}}\gamma_l+\sum_{l=i_m}^{\ell}2\gamma_l$, with $S\cap\lvert\{n_1,\ldots,i_{m-1}\}\rvert\in 2\ZZ$. So
\begin{equation}
s_1=\sum_{k=1}^{\lfloor\frac{m+1}{2} \rfloor-1} i_{m-2k}-i_{m-2k-1}+(i_m-i_{m-1}-1)=T_1(m)+i_m-i_{m-1}-1.
\end{equation}
As above, for $s_2$ and $t_1$ we have the equalities
\begin{equation}
s_2=\sum_{k=1}^{\lfloor\frac{j}{2} \rfloor} i_{m-2k+1}-i_{m-2k}=S_2(m),\quad t_1=\sum_{k=1}^{\lfloor\frac{m+1}{2} \rfloor-1} i_{m-2k}-i_{m-2k-1}=T_1(m).
\end{equation}
Finally, for $t_2$ one has that the roots in $\Delta_+^c\setminus\spann\lrbr{\gamma_i \vert i\in S^c}$ such that $\alpha_{i_m+1}-\alpha_{i_m}=1$ are the ones for which $\alpha_{i_m+1}=2$ and $\alpha_{i_m}=1$. These are $\sum_{l=n_1}^{i_m}\gamma_l+\sum_{l=i_m+1}^{\ell}2\gamma_l$, with $\lvert\{n_1,\ldots,i_m\}\cap S\rvert\in 2\ZZ$, so 
\begin{equation}
t_2=\sum_{k=1}^{\lfloor\frac{m}{2} \rfloor} i_{m-2k+1}-i_{m-2k}=S_2(m).
\end{equation}

Summing everything up we get
\begin{equation}
s_1+s_2-t_1-t_2= i_m-i_{m-1}-1,
\end{equation}
which proves equality \eqref{eq: deltac_Bn2}.

\item Case $j=m$, $i_m=\ell$. We will prove equality
\begin{equation}\label{eq: deltac_Bn3}
\tau_{\ell}= 2\left(\ell-i_{m-1}-1\right)
\end{equation}
that, together with equality \eqref{eq: comformulabn}, proves \eqref{eq:bCoeff3}. Remember that
\begin{equation}
\tau_{\ell}=-2\delta^c_{\ell-1}+2\delta^c_{\ell}=s_1-t_1.
\end{equation}
For $s_1$, the compact roots in $\Delta_+^c\setminus\spann\lrbr{\gamma_i \vert i\in S^c}$ such that $\alpha_{\ell}-\alpha_{\ell-1}=1$ are the ones for which $\alpha_{\ell}=2$ and $\alpha_{\ell-1}=1$, that are of type $\sum_{l=n_1}^{\ell-1}\gamma_l+2\gamma_{\ell}$, with $\lvert S\cap\{n_1,\ldots,\ell-1\}\rvert\in 2\ZZ$. These are
\begin{equation}
s_1=\sum_{k=1}^{\lfloor\frac{j+1}{2} \rfloor-1} i_{j-2k}-i_{j-2k-1}+(\ell-i_{m-1}-1)=T_1(j)+\ell-i_{m-1}-1.
\end{equation}
The coefficient $t_1$ is equal to $T_1(m)$, as in the previous cases, hence
\begin{equation}
s_1-t_1= \ell-i_{m-1}-1,
\end{equation}
which proves equality \eqref{eq: deltac_Bn3} and concludes the proof of the theorem.
\end{itemize}
\end{proof}

The above theorem allows us to compute the coefficients of Vogan diagrams of type $B_{\ell}$ and check their signs. The next two sections are dedicated to Vogan diagrams of type $C_{\ell}$ and $D_{\ell}$. The proofs in these cases are going to be significantly more intricate than the ones we have seen so far. Thus we will try to keep them short and refer to the previous proofs in case of similar computations.


\subsection{$C_{\ell}$ family}\label{sec:cnCoefficients}
Consider a Vogan diagram of type $C_{\ell}$, with $S=\{i_1,\ldots,i_m\}$ the set of indices of simple non-compact roots and denote by $\{\gamma_1,\ldots,\gamma_{\ell}\}$ the set of simple roots. As for the other cases, using the structure of the Cartan matrix $A$ for Lie algebras of type $C_{\ell}$ \cite[Sec.~11.4, Tab.~1]{Humphreys1972}, one can write
\begin{equation}\label{eq:gammaC}
\begin{split}
\Gamma_{i_k}=&\left( A\sum_{\alpha\in\spann\lrbr{\gamma_i \vert i\in S^c}}\alpha\right)_{i_k}\\
&=\left\{ \begin{array}{lcl}-\dime(\hh_{i_{k}})-\dime(\hh_{i_{k+1}}) & \text{if} &k\neq m,\ i_k\neq \ell-1\\
-\dime(\hh_{i_{m}})-2\dime(\hh_{i_{m+1}})& \text{if} &k= m, i_k\neq \ell-1 \\
-\dime(\hh_{\ell-1})-2 & \text{if} & k=m, i_k= \ell-1 \\
-\dime(\hh_{\ell-1}) & \text{if} & k\neq m, i_k= \ell-1 \\
\end{array}\right.\\
=& \left\{ \begin{array}{lcl}
i_{k-1}-i_{k+1}+2 & \text{if} &k\neq m,\ i_k\neq \ell-1\\
i_m+i_{m-1}+1-2\ell & \text{if} &k= m, i_k\neq \ell-1 \\
-\ell+i_{m-1} & \text{if} & k=m,i_k= \ell-1 \\
-\ell+i_{k-1}+2 & \text{if} & k\neq m,i_k= \ell-1 
\end{array}\right.
\end{split}
\end{equation}
where, for $k= m, i_k\neq \ell-1$, the second summand comes from the fact that the roots in $\spann\lrbr{\gamma_i \vert i\in S^c}$ that have non-zero $i_m+1$ component are the roots of the Lie algebra of type $C$ included between $i_m+1$ and $\ell$, which are exactly $2\dime(\hh_{i_{m+1}})$. For the last two cases, the second summand is equal to $0$ if $i_m\neq \ell-1$, and equal to $-2$ if $i_m= \ell-1$, as a consequence of the fact that $A_{\ell-1,\ell}=-2$, and there is only one root between $\ell-1$ and $\ell$.

Similarly for the $A_{\ell}$ and $B_{\ell}$ cases, one has
\begin{equation}\label{eq: formulacncoefficients}
\xi_{i_k}=-4-2\Gamma_{i_k}+4\left( A\sum_{\alpha\in\Delta_+^c}\alpha\right)_{i_k}=2\left(-2+\Gamma_{i_k}+2\tau_{i_k}\right),
\end{equation}
where $\delta^c=\sum_{\alpha\in\Delta_+^c\setminus\spann\lrbr{\gamma_i\vert i\in S^c}}\alpha$ and we put $\tau_{i_k}:=\left(A\delta^c\right)_{i_k}$. Note that
\begin{equation}
\tau_{i_k}=\left\{\begin{array}{lcl}
-\delta^c_{i_k-1}+2\delta^c_{i_k}-\delta^c_{i_k+1} & \text{if} & i_k\neq \ell-1 \\
-\delta^c_{\ell-1}+2\delta^c_{\ell} & \text{if} & i_k=\ell\\
-\delta^c_{\ell-2}+2\delta^c_{\ell-1}-2\delta^c_{\ell} & \text{if} & i_k=\ell-1\\
\end{array}\right.,
\end{equation}
again as a consequence of the structure of $A$.

The coefficients $\tau_{i_k}$ depend on the structure of the roots of type $C_{\ell}$, which we recall below. The positive roots of type $C_{\ell}$ are 
\begin{equation}
\begin{split}
\Delta_+=&\left\{ \sum_{i=n_1}^{n_2}\gamma_i\ \vert\ 1\leq n_1\leq n_2\leq \ell\right\}\cup \cup \left\{\sum_{i=n_1}^{\ell-1}2 \gamma_i+\gamma_{\ell}\ \vert\ 1\leq n_1\leq \ell-1\right\}\cup \\ & \left\{ \sum_{i=n_1}^{n_2}\gamma_i+\sum_{i=n_2+1}^{\ell-1}2 \gamma_i+\gamma_{\ell}\ \vert\ 1\leq n_1\leq n_2\leq \ell-2\right\},
\end{split}
\end{equation}
and the positive compact roots are
\begin{equation}
\begin{split}\label{eq:rootscn}
\Delta^c_+=&\left\{ \sum_{i=n_1}^{n_2}\gamma_i\ \vert\ 1\leq n_1\leq n_2\leq \ell,\ \lvert S\cap\{n_1,\ldots,n_2\}\rvert\in 2\ZZ\right\}\cup \\ 
& \cup\left\{ \sum_{i=n_1}^{n_2}\gamma_i+\sum_{i=n_2+1}^{\ell-1}2 \gamma_i+\gamma_{\ell}\ \vert\ 1\leq  n_1\leq n_2\leq \ell-2,\ \lvert S\cap \{n_1,\ldots,n_2,\ell\}\rvert\in 2\ZZ\right\}\cup\\
& \cup\left\{ \sum_{i=n_1}^{\ell-1}2 \gamma_i+\gamma_{\ell}\ \vert\ 1\leq n_1\leq n-1,\ \ell\notin S\right\}
\end{split}
\end{equation}

The following theorem shows the structure of the coefficients $\xi_i$.
\begin{thm}\label{thm: xiCnall}
Given a Vogan diagram of type $C_{\ell}$ with $S=\lrbr{i_1,\ldots,i_m}$ the set of indices of simple non-compact roots, for $i_j\in S$ one has
\begin{itemize}

\item If $i_j\neq \ell-1$

\begin{itemize}

\item If $j\neq m$ and $\ell\notin S$
\begin{equation}\label{eq: xi_Cn1}
\xi_{i_j}=2\left(-i_{j-1}+4i_j-3i_{j+1} +4S_+(j)+4(-1)^{j+m}\right).
\end{equation}

\item If $j\neq m$ and $\ell\in S$
\begin{equation}\label{eq: xi_Cn2}
\xi_{i_j}= 2\left(3i_{j-1}-4i_j+i_{j+1}+4S_-(j)\right).
\end{equation}

\item If $j=m$ and $i_j=\ell$
\begin{equation}\label{eq: xi_Cn3}
\xi_{\ell}= 2\left(\ell-i_{m-1}-3+2S_-(j) \right).
\end{equation}

\item If $j=m$ and $i_j\neq \ell$
\begin{equation}\label{eq: xi_Cn4}
\xi_{i_m}= 2\left(3i_m-i_{m-1}-2\ell+1\right).
\end{equation}
\end{itemize}

\item If $i_j=\ell-1$

\begin{itemize}

\item If $\ell\in S$
\begin{equation}\label{eq: xi_Cn5}
\xi_{\ell-1}= 2(\ell+3i_{j-1}+6+ 4S_-(j)).
\end{equation}

\item If $\ell\notin S$
\begin{equation}\label{eq: xi_Cn6}
\xi_{\ell-1}= 2(\ell-i_{j-1}).
\end{equation}

\end{itemize}
\end{itemize}
\end{thm}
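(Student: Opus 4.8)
The plan is to follow verbatim the strategy already deployed for the $A_\ell$ and $B_\ell$ families; the only genuinely new feature is the richer root system of type $C_\ell$, together with the fact that the terminal node $\gamma_\ell$ is long, so that its compactness — i.e.\ whether $\ell\in S$ — enters several of the parity conditions. First, recall that by \eqref{eq: formulacncoefficients} we already have
\[
\xi_{i_k} = 2\bigl(-2 + \Gamma_{i_k} + 2\tau_{i_k}\bigr),
\]
with $\Gamma_{i_k}$ written out explicitly in \eqref{eq:gammaC}. Hence everything reduces to evaluating $\tau_{i_k} = (A\delta^c)_{i_k}$, where $\delta^c = \sum_{\alpha \in \Delta_+^c \setminus \spann\{\gamma_i \mid i \in S^c\}} \alpha$. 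Depending on whether $i_k$ equals $\ell$, $\ell-1$, or neither, $\tau_{i_k}$ is one of the three combinations $-\delta^c_{i_k-1}+2\delta^c_{i_k}-\delta^c_{i_k+1}$, $-\delta^c_{\ell-1}+2\delta^c_\ell$, or $-\delta^c_{\ell-2}+2\delta^c_{\ell-1}-2\delta^c_\ell$ dictated by the Cartan matrix. As in the earlier proofs I would write $\delta^c_{i_j-1}=\delta^c_{i_j}-s_1+t_1$ and $\delta^c_{i_j+1}=\delta^c_{i_j}-s_2+t_2$, so that the interior case becomes $\tau_{i_j}=s_1-t_1+s_2-t_2$, where $s_1$ (resp.\ $t_1$) counts the compact roots $\sum_l \alpha_l \gamma_l$ in $\Delta_+^c \setminus \spann\{\gamma_i \mid i \in S^c\}$ with $\alpha_{i_j}-\alpha_{i_j-1}=1$ (resp.\ $\alpha_{i_j-1}-\alpha_{i_j}=1$), and similarly for $s_2,t_2$ on the other side.

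Second, I would carry out the counting family by family using the explicit description \eqref{eq:rootscn} of $\Delta_+^c$. The short strings $\sum_{i=n_1}^{n_2}\gamma_i$ contribute exactly the same sums as in the $A_\ell$ case, namely pieces of $S_1(j),S_2(j),T_1(j),T_2(j)$. The mixed strings $\sum_{i=n_1}^{n_2}\gamma_i+\sum_{i=n_2+1}^{\ell-1}2\gamma_i+\gamma_\ell$ and the purely long strings $\sum_{i=n_1}^{\ell-1}2\gamma_i+\gamma_\ell$ contribute further terms, and this is precisely where the parity condition — which for these two families reads $|S\cap\{n_1,\dots,n_2,\ell\}|\in 2\ZZ$, respectively $\ell\notin S$ — forces the dichotomy $\ell\in S$ versus $\ell\notin S$ in the statement. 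Since a coefficient $\alpha_{i_j}$ can now be $0$, $1$ or $2$, each of $s_1,s_2,t_1,t_2$ splits into an ``$\alpha_{i_j}\le 1$'' part and an ``$\alpha_{i_j}=2$'' part: the former reproduces the $A_\ell$-type sums (doubled, when both a short string and a mixed string are available), while the latter yields linear terms such as $i_j-i_{j-1}-1$. Exactly as in the $B_\ell$ proof, the interface at the right-hand end of the maximal run of non-compact nodes produces Kronecker-delta correction terms of the form $\delta_{\ell+1,\,i_{j+2\lceil\cdot\rceil}}$, which upon combining the two sides telescope into $(-1)^{j+m}$ (using the conventions $i_0=0$, $i_{m+1}=\ell+1$). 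Assembling the pieces and re-expressing $S_1(j)-T_2(j)=S_+(j)$ and $S_2(j)-T_1(j)=S_-(j)$ via \eqref{eq: s+}--\eqref{eq: s-} then yields \eqref{eq: xi_Cn1} and \eqref{eq: xi_Cn2}.

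Third, the boundary cases $i_j=\ell$, $i_j=\ell-1$, and $j=m$ are treated separately and last, since there $\tau_{i_j}$ has the modified shape recorded above (reflecting $A_{\ell,\ell-1}=-1$, $A_{\ell-1,\ell}=-2$) and the long and mixed roots dominate the count while several of the short-string quantities vanish; in these cases the parity condition simply tests $\ell\in S$, and one obtains the four remaining formulae \eqref{eq: xi_Cn3}--\eqref{eq: xi_Cn6} after the same bookkeeping. Finally, speciality of the diagram is read off from Theorem \ref{cor::specialvALL} once the signs of all the $\xi_{i_j}$, $i_j\in S$, are known.

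I expect the main obstacle to be the bookkeeping of the parity conditions across the three root families simultaneously with the $\ell\in S$ / $\ell\notin S$ split: one must be careful that the purely long roots $\sum_{i=n_1}^{\ell-1}2\gamma_i+\gamma_\ell$ contribute exactly when $\ell\notin S$, that the $\gamma_\ell$ occurring with coefficient $1$ is correctly included in the parity count for the mixed strings, and that the various Kronecker-delta interface corrections cancel to leave the clean constants $(-1)^{j+m}$ in \eqref{eq: xi_Cn1} and the integer shifts appearing in \eqref{eq: xi_Cn3} and \eqref{eq: xi_Cn5}. The edge conventions $i_0=0$, $i_{m+1}=\ell+1$ and the degenerate subcases (e.g.\ $j=1$, $j=m$, or $i_{m-1}=\ell-1$ adjacent to $i_m=\ell$) must then be verified one by one, just as in the $B_\ell$ proof.
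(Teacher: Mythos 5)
Your proposal follows essentially the same route as the paper's proof: reduce everything to $\tau_{i_k}=(A\delta^c)_{i_k}$ via \eqref{eq: formulacncoefficients}, split into the interior case and the boundary cases $i_k=\ell,\ell-1$ according to the Cartan matrix, decompose the interior case as $s_1-t_1+s_2-t_2$ by counting compact roots in $\Delta_+^c\setminus\spann\{\gamma_i\mid i\in S^c\}$ family by family, and track the Kronecker-delta interface corrections that combine into the $(-1)^{j+m}$ terms. The plan, including the source of the $\ell\in S$ versus $\ell\notin S$ dichotomy and the doubled $A_\ell$-type sums from the short and mixed strings, matches the paper's argument; what remains is only the explicit bookkeeping you correctly identify as the main labor.
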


\begin{proof}
Assume that $i_0=0$ and $i_{m+1}=\ell+1$. 
\begin{itemize}
\item Case $i_j\neq \ell-1$. 

\begin{itemize}
\item Case $j\neq m$, $\ell\notin S$. We will prove that 
\begin{equation}\label{eq: deltac_Cn1}
\tau_{i_j}=-i_{j-1}+2i_j-i_{j+1} +2(S_1(j)- T_2(j))+2(-1)^{j+m},
\end{equation}
that, together with equalities \eqref{eq:gammaC} and \eqref{eq: formulacncoefficients} proves \eqref{eq: xi_Cn1}. We need to compute
\begin{equation}
\tau_{i_j}=-\delta_{i_j-1}^c+2\delta^c_{i_j}-\delta_{i_j+1}^c=s_1-t_1+s_2-t_2.
\end{equation}
We start with the coefficient $s_1$. The compact roots in $\Delta^c_+\setminus\spann\lrbr{\gamma_i \vert i\in S^c}$ such that $\alpha_{i_j}-\alpha_{i_j-1}=1,2$ are the ones for which $\alpha_{i_j}=1$ and $\alpha_{i_j-1}=0$, or $\alpha_{i_j}=2$ and $\alpha_{i_j-1}=1$, or $\alpha_{i_j}=2$ and $\alpha_{i_j-1}=0$. Following similar arguments to the ones used in the proof of case $B_{\ell}$, and looking and the structure of the roots \eqref{eq:rootscn}, one has
\begin{equation}
\begin{split}
s_1=& 2\sum_{k=1}^{\lceil\frac{m -j}{2} \rceil}i_{j+2k}-i_{j+2k-1}+\sum_{k=1}^{\lfloor\frac{j+1}{2} \rfloor-1} i_{j-2k}+i_{j-2k-1}+(i_j-i_{j-1}-1)+2+ \\&-2\delta_{\ell+1,i_{j+2\lceil\frac{m -j}{2} \rceil}} \\
=& 2S_1(j)+T_1(j)+i_j-i_{j-1}+1-2\delta_{\ell+1,i_{j+2\lceil\frac{m -j}{2} \rceil}},
\end{split}
\end{equation}
where the term $\delta_{\ell+1,i_{j+2\lceil\frac{m -j}{2} \rceil}}$ comes from the fact that $\alpha_{\ell}$ is fixed and equal to $1$ in the roots $\sum_{l=i_j}^{n_1}\gamma_l+\sum_{l=n_1+1}^{\ell-1}2 \gamma_l+\gamma_{\ell}$ and $\sum_{l=n_1}^{i_j-1}\gamma_l+\sum_{l=i_j}^{\ell-1}2 \gamma_i+\gamma_{\ell}$.

As above, the coefficients $s_2$ and $t_1$ are precisely $s_2=S_2(j)$ and $t_1=T_1(j)$.

Similarly to $s_1$, for $t_2$ one has that the roots in $\Delta^c_+\setminus\spann\lrbr{\gamma_i \vert i\in S^c}$ such that $\alpha_{i_j+1}-\alpha_{i_j}=1,2$ are the ones for which $\alpha_{i_j+1}=1$ and $\alpha_{i_j}=0$, or $\alpha_{i_j+1}=2$ and $\alpha_{i_j}=1$, or $\alpha_{i_j+1}=2$ and $\alpha_{i_j}=0$, hence
\begin{equation}
\begin{split}
t_2=& 2\sum_{k=1}^{\lceil\frac{m -j-1}{2} \rceil}i_{j+2k+1}-i_{j+2k}+(i_{j+1}-i_j-1)+\sum_{k=1}^{\lfloor\frac{j}{2} \rfloor} i_{j-2k+1}-i_{j-2k}+2+\\&-2\delta_{\ell+1,i_{j+2\lceil\frac{m -(j+1)}{2} \rceil+1}} \\
=& i_{j+1}-i_j+1+2T_2(j)+S_1(j)-2\delta_{\ell+1,i_{j+2\lceil\frac{m -(j+1)}{2} \rceil+1}}
\end{split}
\end{equation}
Finally
\begin{equation}
s_1+s_2-t_1-t_2=-i_{j-1}+2i_j-i_{j+1}+2(S_1(j)- T_2(j))+2(-1)^{j+m}.
\end{equation}
This proves equality \eqref{eq: deltac_Cn1}.

\item Case $j\neq m$, $\ell\in S$. We will prove that
\begin{equation}\label{eq: deltac_Cn2}
\tau_{i_j}= i_{j-1}-2i_j+i_{j+1}+2(S_2(j) -T_1(j)).
\end{equation}
This, with equalities \eqref{eq:gammaC} and \eqref{eq: formulacncoefficients} proves \eqref{eq: xi_Cn2}. Recall that
\begin{equation}
\tau_{i_j}=-\delta_{i_j-1}^c+2\delta^c_{i_j}-\delta_{i_j+1}^c=s_1-t_1+s_2-t_2.
\end{equation}
The computations are similar as the previous case, except that the fact that $\gamma_{\ell}\in S$ changes the compactness of the roots with respect to the above point. Hence we have
\begin{equation}
\begin{split}
s_1=& \sum_{k=1}^{\lceil\frac{m -j}{2} \rceil}i_{j+2k}-i_{j+2k-1}+
\sum_{k=1}^{\lceil\frac{m -j-1}{2} \rceil}i_{j+2k+1}-i_{j+2k}+ (i_{j+1}-i_j)+\\
& + \sum_{k=1}^{\lfloor\frac{j}{2} \rfloor} i_{j-2k+1}+i_{j-2k}-2\delta_{\ell+1,j+2\lceil\frac{m -j}{2} \rceil} \\
=& i_{j+1}-i_j+S_1(j)+T_2(j)+S_2(j)-2\delta_{\ell+1,j+2\lceil\frac{m -j}{2} \rceil}\\
t_2=&\sum_{k=1}^{\lceil\frac{m -j-1}{2} \rceil}i_{j+2k+1}-i_{j+2k}+\sum_{k=1}^{\lceil\frac{m -j}{2} \rceil}i_{j+2k}-i_{j+2k-1}+ \\
&\sum_{k=1}^{\lfloor\frac{j+1}{2} \rfloor-1} i_{j-2k}-i_{j-2k-1}+(i_{j}-i_{j-1})-2\delta_{\ell+1,i_{j+2\lceil\frac{m -(j+1)}{2} \rceil+1}} \\
=& i_{j}-i_{j-1}+T_2(j)+S_1(j)+T_1(j)-2\delta_{\ell+1,i_{j+2\lceil\frac{m -(j+1)}{2} \rceil+1}},
\end{split}
\end{equation}
and, again $s_2=S_2(j)$, $t_1=T_1(j)$. Finally
\begin{equation}
s_1+s_2-t_1-t_2=i_{j-1}-2i_j+i_{j+1}+2(S_2(j) -T_1(j))+2(-1)^{j+m}.
\end{equation}
This proves equality \eqref{eq: deltac_Cn2}.

\item Case $j=m$ and $i_j=\ell$. We will prove that
\begin{equation}\label{eq: deltac_Cn3}
\tau_{\ell}= S_2(m)-T_1(m).
\end{equation}
This, with equalities \eqref{eq:gammaC} and \eqref{eq: formulacncoefficients} proves \eqref{eq: xi_Cn3}. In this case, $\tau_{\ell}=-\delta^c_{\ell-1}+2\delta^c_{\ell}$. Note that the compact roots in $\Delta^c_+\setminus\spann\lrbr{\gamma_i \vert i\in S^c}$ such that $\alpha_{\ell}\neq 0$ are
\begin{align}
& \sum_{k=n_1}^{\ell} \gamma_k,\quad \lvert S\cap\lrbr{n_1,\ldots,\ell}\rvert\in 2\ZZ\\
& \sum_{k=n_1}^{n_2} \gamma_k+\sum_{k=n_2+1}^{\ell-1} 2\gamma_k+\gamma_{\ell},\quad \lvert S\cap\lrbr{n_1,\ldots,n_2}\rvert\in 1+2\ZZ,
\end{align}
with coefficient $\alpha_{\ell}=1$. On the other hand, the compact roots in $\Delta^c_+\setminus\spann\lrbr{\gamma_i \vert i\in S^c}$ such that $\alpha_{\ell-1}\neq 0$ are
\begin{align}
& \sum_{k=n_1}^{\ell-1} \gamma_k,\quad \lvert S\cap\lrbr{n_1,\ldots,\ell-1}\rvert\in 2\ZZ\\
& \sum_{k=n_1}^{\ell-1} \gamma_k+\gamma_{\ell},\quad \lvert S\cap \lrbr{n_1,\ldots,\ell-1}\rvert\in 1+2\ZZ\\
& \sum_{k=n_1}^{n_2} \gamma_k+\sum_{k=n_2+1}^{\ell-1} 2\gamma_k+\gamma_{\ell},\quad \lvert S\cap\lrbr{n_1,\ldots,n_2}\rvert\in 1+2\ZZ,
\end{align}
with coefficients $\alpha_{\ell-1}=1,1,2$ respectively. Not all the roots contribute to the summation. In particular, the ones that do are
\begin{itemize}
\item $\sum_{k=n_1}^{\ell-1} \gamma_k,\ \lvert S\cap \lrbr{n_1,\ldots,\ell-1}\rvert\in 2\ZZ$, which contribute by $-\sum_{k=1}^{\lfloor\frac{j}{2} \rfloor} i_{j-2k+1}+i_{j-2k}$.
\item $\sum_{k=n_1}^{\ell-1} \gamma_k+\gamma_{\ell},\quad \lvert S\cap \lrbr{n_1,\ldots,\ell-1}\rvert\in 1+2\ZZ$, which contribute by $\sum_{k=1}^{\lfloor\frac{j+1}{2} \rfloor-1} i_{j-2k}-i_{j-2k-1}$.
\end{itemize}
By summing all the contributes we get
\begin{equation}
\tau_{\ell}= -\sum_{k=1}^{\lfloor\frac{j+1}{2} \rfloor-1} i_{j-2k}-i_{j-2k-1}+ \sum_{k=1}^{\lfloor\frac{j}{2} \rfloor} i_{j-2k+1}+i_{j-2k}=-T_1(j)+S_2(j).
\end{equation}
This proves equality \eqref{eq: deltac_Cn3}.

\item Case $j=m$ and $i_j\neq \ell$. We are going to prove that
\begin{equation}\label{eq: deltac_Cn4}
\tau_{i_m}= i_m-i_{m-1}+1.
\end{equation}
This, with equalities \eqref{eq:gammaC} and \eqref{eq: formulacncoefficients} proves \eqref{eq: xi_Cn4}. This case is similar to the case $j\neq m$, $\ell\notin S$, except that we do not have to take into account the roots with $\alpha_{m+1}\neq 0,\ldots,\alpha_{\ell}\neq 0$, since these are not included in $\Delta^c_+\setminus\spann\lrbr{\gamma_i \vert i\in S^c}$. Thus
\begin{equation}
\begin{split}
s_1=&\sum_{k=1}^{\lfloor\frac{j+1}{2} \rfloor-1} i_{m-2k}-i_{m-2k-1}+(i_m-i_{m-1}-1)+2=T_1(m)+i_m-i_{m-1}+1. \\
t_2=&\sum_{k=1}^{\lfloor\frac{j}{2} \rfloor} i_{m-2k+1}-i_{m-2k}=S_2(m), 
\end{split}
\end{equation}
and $s_2=S_2(m)$, $t_1=T_1(m)$. This reads
\begin{equation}
s_1+s_2-t_1-t_2= i_m-i_{m-1}+1,
\end{equation}
which proves equality \eqref{eq: deltac_Cn4}.
\end{itemize}

\item Case $i_j=\ell-1$. In this case we have to compute
\begin{equation}
\tau_{\ell-1}=-\delta^c_{\ell-2}+2\delta^c_{\ell-1}-2\delta^c_{\ell},
\end{equation}
and the explicit computations, together with equalities \eqref{eq:gammaC} and \eqref{eq: formulacncoefficients}, prove \eqref{eq: xi_Cn5} and \eqref{eq: xi_Cn6}.

We have to consider two cases.
\begin{itemize}

\item Case $\ell\in S$. We will prove that
\begin{equation}\label{eq: deltac_Cn5}
\tau_{\ell-1}= 3-\ell+i_{j-1}+ 2(S_2(j)-T_1(j)).
\end{equation}
The roots in $\Delta^c_+\setminus\spann\lrbr{\gamma_i \vert i\in S^c}$ such that $\alpha_{\ell-1}\neq 0$ are
\begin{align}
& \sum_{k=n_1}^{\ell-1}\gamma_k,\quad \lvert S\cap\lrbr{n_1,\ldots,\ell-1}\rvert \in 2\ZZ \label{eq: n-1An}\\
& \sum_{k=n_1}^{\ell-1}\gamma_k+\gamma_{\ell},\quad \lvert S\cap\lrbr{n_1,\ldots,\ell}\rvert  \in 2\ZZ \label{eq: n-1An2} \\
& \sum_{k=n_1}^{n_2}\gamma_k+\sum_{k=n_2+1}^{\ell-1}2\gamma_k+\gamma_{\ell},\quad \lvert S\cap\lrbr{n_1,\ldots,n_2}\rvert  \in 1+2\ZZ, \label{eq: n-1Bn}
\end{align}
with coefficient $\alpha_{\ell-1}=1$, $\alpha_{\ell-1}=1$ and $\alpha_{\ell-1}=2$ respectively. Call $L=\sum_{i=1}^{\ell} \lambda_i\gamma_i$ the sum of all roots of type \eqref{eq: n-1An}, $M=\sum_{i=1}^{\ell} \mu_i\gamma_i$ the sum of roots of type \eqref{eq: n-1An2} and $N=\sum_{i=1}^{\ell} \nu_i\gamma_i$ the sum of the roots of type \eqref{eq: n-1Bn}. Note that, $\delta^c_{\ell-1}=\lambda_{\ell-1}+\mu_{\ell-1}+\nu_{\ell-1}$. On the other hand, the roots in $\Delta^c_+\setminus\spann\lrbr{\gamma_i \vert i\in S^c}$ such that $\alpha_{\ell}\neq 0$ are \eqref{eq: n-1An2} and \eqref{eq: n-1Bn}, both with coefficient $\alpha_{\ell}=1$. In particular, $\delta^c_{\ell}=\mu_{\ell-1}+\frac{1}{2}\nu_{\ell-1}$. So
\begin{equation}
\begin{split}
2\delta^c_{\ell-1}-2\delta^c_{\ell}=& 2\lambda_{\ell-1}+2\mu_{\ell-1}+2\nu_{\ell-1}-2\mu_{\ell-1}-\nu_{\ell-1}\\
=& \lambda_{\ell-1}+\mu_{\ell-1}+\nu_{\ell-1}+\lambda_{\ell-1}-\mu_{\ell-1}\\
=& \delta^c_{\ell-1}+\lambda_{\ell-1}-\mu_{\ell-1}, 
\end{split}
\end{equation}
which reads
\begin{equation}
\tau_{\ell-1}= s_1-t_1+\lambda_{\ell-1}-\mu_{\ell-1}.
\end{equation}
So it suffices to compute the coefficients $s_1, t_1, \lambda_{\ell-1},\mu_{\ell-1}$. We start with $\lambda_{\ell-1}$. This coefficient is given by the $(\ell-1)$th coefficient of the sum of the roots of type \eqref{eq: n-1An}, that corresponds to
\begin{equation}
\lambda_{\ell-1}=\sum_{k=1}^{\lfloor\frac{j}{2} \rfloor} i_{j-2k+1}-i_{j-2k}=S_2(j).
\end{equation}
Similarly, the coefficient $\mu_{\ell-1}$ corresponds to 
\begin{equation}
\mu_{\ell-1}=\sum_{k=1}^{\lfloor\frac{j+1}{2} \rfloor-1} i_{j-2k}-i_{j-2k-1}+(i_j-i_{j-1}-1)=i_j-i_{j-1}-1+T_1(j).
\end{equation}
Then, similar computations as in the other sections reads $s_1=S_2(j)+1$, since we need to take into account also the root $\gamma_{\ell-1}+\gamma_{\ell}$, and and $t_1=T_1(j)$. Finally
\begin{equation}
s_1-t_1+\lambda_{\ell-1}-\mu_{\ell-1}= 3-\ell+i_{j-1}+ 2(S_2(j)-T_1(j)).
\end{equation}
This proves equality \eqref{eq: deltac_Cn5}.

\item Case $\ell\notin S$. We will prove that 
\begin{equation}\label{eq: deltac_Cn6}
\tau_{\ell-1}= \ell-i_{j-1}.
\end{equation}
The roots in $\Delta^c_+\setminus\spann\lrbr{\gamma_i \vert i\in S^c}$ such that $\alpha_{\ell-1}\neq 0$ are
\begin{align}
& \sum_{k=n_1}^{\ell-1}\gamma_k,\quad \lvert S\cap\lrbr{n_1,\ldots,\ell-1}\rvert \in 2\ZZ \label{eq: n-1Anf}\\
& \sum_{k=n_1}^{\ell-1}\gamma_k+\gamma_{\ell},\quad \lvert S\cap\lrbr{n_1,\ldots,\ell-1}\rvert \in 2\ZZ \label{eq: n-1An2f} \\
& \sum_{k=n_1}^{n_2}\gamma_k+\sum_{k=n_2+1}^{\ell-1}2\gamma_k+\gamma_{\ell},\quad \lvert S\cap\lrbr{n_1,\ldots,n_2}\rvert \in 2\ZZ, \label{eq: n-1Bnf}
\end{align}
with coefficients $\alpha_{\ell-1}=1$, $\alpha_{\ell-1}=1$ and $\alpha_{\ell-1}=2$ respectively. Similarly as above, it turns out that $2\delta^c_{\ell}=\delta^c_{\ell-1}$, hence $2\delta^c_{\ell-1}-2\delta^c_{\ell}=\delta^c_{\ell-1}$. This, in turns, gives
\begin{equation}
\tau_{\ell-1}= -\delta^c_{\ell-1}+s_1-t_1+\delta^c_{\ell-1}=s_1-t_1.
\end{equation}
So it is enough to compute the coefficients $s_1, t_1$, and this can be done with the techniques used for the previous points and proofs. Hence
\begin{equation}
\begin{split}
s_1=&\sum_{k=1}^{\lfloor\frac{j+1}{2} \rfloor-1} i_{j-2k}-i_{j-2k-1}+(i_j-i_{j-1}-1)+2=T_1(j)+i_j-i_{j-1}+1\\
t_1=&\sum_{k=1}^{\lfloor\frac{j+1}{2} \rfloor-1} i_{j-2k}-i_{j-2k-1}=T_1(j),
\end{split}
\end{equation}
and this implies
\begin{equation}
s_1-t_1 = \ell-i_{j-1}.
\end{equation}
This proves equality \eqref{eq: deltac_Cn6} and concludes the proof of the theorem.
\end{itemize}
\end{itemize}
\end{proof}

The next section is the last one and concludes the computation of the paper.


\subsection{$D_{\ell}$ family}\label{sec:dnCoefficients}

In this final section we are going to study the coefficients $\xi_i$ for Vogan diagrams of type $D_{\ell}$. Consider a Vogan diagram of type $D_{\ell}$ with $S=\lrbr{i_1,\ldots,i_m}$ the set of indices of simple non-compact roots and denote by $\{\gamma_1,\ldots,\gamma_{\ell}\}$ the set of simple roots. As in the previous cases, with the definition of the Cartan matrix $A$ for Lie algebras of type $D_{\ell}$ \cite[Sec.~11.4,Tab.~1]{Humphreys1972}, we get
\begin{equation}\label{eq: gammadn}
\begin{split}
\Gamma_{i_k}=&\left( A\sum_{\alpha\in\spann\lrbr{\gamma_i \vert i\in S^c}}\alpha\right)_{i_k}\\
=&\left\{ \begin{array}{lcl}
-\dime(\hh_{i_{k}})-\dime(\hh_{i_{k+1}})-\delta_{\nu\in S}\delta_{\nu'\notin S}\delta_{k=m-1} & \text{if} &k\neq m,\ i_k\neq \ell-2\\
-\dime(\hh_{i_{m}})-2\dime(\hh_{i_{m+1}})+2& \text{if} &k= m, i_k\neq \ell-2 \\
-\dime(\hh_{i_{k}})-(\delta_{\ell-1\notin S}+\delta_{\ell\notin S}) & \text{if} &i_k= \ell-2\\
-2\dime(\hh_{i_{k}}) & \text{if} & i_k=\ell-1,\ell
 \end{array}\right.\\
=&\left\{ \begin{array}{lcl}
i_{k-1}-i_{k+1}+2-\delta_{\nu\in S}\delta_{\nu'\notin S}\delta_{k=m-1} & \text{if} & k\neq m,\ i_k\neq \ell-2\\
-2\ell+i_m+ i_{m-1}+3 & \text{if} & k= m, i_k\neq \ell-2 \\
-\ell+i_{k-1}+3-\delta_{\ell-1\notin S}-\delta_{\ell\notin S} & \text{if} & i_k= \ell-2\\
-2\ell+2i_{k-1}+4 & \text{if} & i_k=\ell-1,\ell
\end{array}\right.,
\end{split}
\end{equation}
with $\nu$ and $\nu'$ equal to $\ell-1$, $\ell$ or $\ell$, $\ell-1$. The $\delta$ factor in the first case comes from the fact that if $k=m-1$ and $i_m=\nu$, then there is another root that contributes to the coefficient, that is $\sum_{i=i_k+1}^{\ell-1}\gamma_i+\gamma_{\nu'}$. The same holds for $\nu'$. For the third case the second summand comes from the fact that the roots of the Lie subalgebra with Dynkin diagram having vertices with indices greater than $\ell-2$ are only: $\gamma_{\ell-1}$ if $\gamma_{\ell}\in S$, $\gamma_{\ell}$ if $\gamma_{\ell-1}\in S$, $\gamma_{\ell-1}+\gamma_{\ell}$ if $\gamma_{\ell-1},\gamma_{\ell}\notin S$, none if $\gamma_{\ell-1},\gamma_{\ell}\in S$. The other terms can be computed as in previous families. Moreover
\begin{equation}\label{eq: xiCoefficientsDnPre}
\xi_{i_k}=-4-2\Gamma_{i_k}+4\left( A\sum_{\alpha\in\Delta_+^c}\alpha\right)_{i_k}=2\left(-2+\Gamma_{i_k}+2\tau_{i_k}\right),
\end{equation}
where $\delta^c=\sum_{\alpha\in\Delta_+^c\setminus\spann\lrbr{\gamma_i\vert i\in S^c}}\alpha$ and $\tau_{i_k}:=\left(A\delta^c\right)_{i_k}$. As a consequence of the structure of the Cartan matrix $A$, the coefficients $\tau_{i_k}$ can be written explicitly as
\begin{equation}
\tau_{i_k}=\left\{\begin{array}{lcl}
-\delta^c_{i_k-1}+2\delta^c_{i_k}-\delta^c_{i_k+1} & \text{if} & i_k\neq \ell-2,\ell-1,\ell \\
-\delta^c_{\ell-3}+2\delta^c_{\ell-2}-\delta^c_{\ell-1}-\delta^c_{\ell} & \text{if} & i_k=\ell-2\\
-\delta^c_{\ell-2}+2\delta^c_{\ell-1} & \text{if} & i_j=\ell-1,\ell\\
\end{array}\right..
\end{equation}
Note that the coefficients $\tau_{\ell-1}$ and $\tau_{\ell}$ are the same, by symmetry of Dynkin diagrams of type $D_{\ell}$.

Again, the goal is to compute explicitly the coefficients $\tau_{i_k}$ in terms of the indices of the simple non-compact roots of the Vogan diagram. Since the computations depend on the structure of the roots of type $D_{\ell}$, we recall them here below. The roots for Lie algebra of type $D_{\ell}$ are
\begin{equation}
\begin{split}
\Delta_+=&\left\{ \sum_{i=n_1}^{n_2}\gamma_i\ \vert\ 1\leq n_1\leq n_2\leq \ell-2\right\}\cup \left\{ \sum_{i=n_1}^{\ell-2}\gamma_i+\gamma_{j}\ \vert\ 1\leq n_1\leq \ell-2,\ j=\ell-1,\ell\right\} \cup\\ 
& \cup \left\{ \sum_{i=n_1}^{\ell-2}\gamma_i+\gamma_{\ell-1}+\gamma_{\ell}\ \vert\ 1\leq n_1\leq \ell-2\right\}\cup \\ & \cup \left\{ \sum_{i=n_1}^{n_2}\gamma_i+\sum_{i=n_2+1}^{\ell-2}2 \gamma_i+\gamma_{\ell-1}+\gamma_{\ell}\ \vert\ 1\leq n_1\leq n_2\leq \ell-3\right\} 
\end{split}
\end{equation}
and the positive compact roots are
\begin{equation}\label{eq: compactRootsDn}
\begin{split}
\Delta^c_+=&\left\{ \sum_{i=n_1}^{n_2}\gamma_i\ \vert\ 1\leq n_1\leq n_2\leq \ell-2,\ \lvert S\cap\{n_1,\ldots,n_2\}\rvert\in 2\ZZ\right\}\cup \\ 
& \cup \left\{ \sum_{i=n_1}^{\ell-2}\gamma_i+\gamma_{j}\ \vert\ 1\leq n_1\leq \ell-1,\ j=\ell-1,\ell,\ \lvert S\cap\{n_1,\ldots,\ell-2,j\}\rvert\in 2\ZZ\right\}\cup \\
& \cup \left\{ \sum_{i=n_1}^{\ell-2}\gamma_i+\gamma_{\ell-1}+\gamma_{\ell}\ \vert\ 1\leq n_1\leq \ell-2,\ \lvert S\cap\{n_1,\ldots,\ell-2,\ell-1,\ell\}\rvert\in 2\ZZ\right\}\cup\\
& \cup\left\{ \sum_{i=n_1}^{n_2}\gamma_i+\sum_{i=n_2+1}^{\ell-2}2 \gamma_i+\gamma_{\ell-1}+\gamma_{\ell}\ \vert\ 1\leq  n_1\leq n_2\leq \ell-3,\ \lvert \right.\\ &\phantom{\cup}\lvert S\cap\{n_1,\ldots,n_2,\ell-1,\ell\}\rvert\in 2\ZZ\Bigg{\}}\\
\end{split}
\end{equation}

At this point we can write explicitly the coefficients $\xi_i$ in term of the indices in $S$.

\begin{thm}
Given a Vogan diagram of type $D_{\ell}$ with $S=\lrbr{i_1,\ldots,i_m}$ the set of indices of simple non-compact roots, for $i_j\in S$ one has

\begin{itemize}

\item If $i_j\notin\lrbr{\ell-2,\ell-1,\ell}$
\begin{itemize}

\item If $j\neq m$ and $\lvert S\cap \lrbr{\ell-1,\ell}\rvert=0,2$
\begin{equation}\label{eq: xi_Dn1}
\xi_{i_j}= 2(-i_{j-1}+4 i_j-3i_{j+1}+4(S_+(j)+(-1)^{m+j})
\end{equation}

\item If $j\neq m$ and $\lvert S\cap \lrbr{\ell-1,\ell}\rvert=1$
\begin{equation}\label{eq: xi_Dn2}
\xi_{i_j}= 2(3i_{j-1}-4 i_j+i_{j+1}+4S_-(j) +\delta_{j,m-1})
\end{equation}

\item If $j= m$
\begin{equation}\label{eq: xi_Dn3}
\xi_{i_m}= 2( -2\ell+3i_m-i_{m-1}-1)
\end{equation}
\end{itemize}

\item If $i_j=\ell-2$
\begin{itemize}
\item If $\lvert S\cap \lrbr{\ell-1,\ell}\rvert=0,2$
\begin{equation}\label{eq: xi_Dn4}
\xi_{\ell-2}=2(\ell-i_{j-1}-5+2(2\delta_{i_{m-1},\ell-1}\delta_{i_m,\ell}-\delta_{\ell-1\notin S}\delta_{\ell\notin S}))
\end{equation}

\item If $\lvert S\cap \lrbr{\ell-1,\ell}\rvert=1$
\begin{equation}\label{eq: xi_Dn5}
\xi_{\ell-2}= 2(-3\ell+3i_{j-1}+8+4S_-(j))
\end{equation}
\end{itemize}

\item If $i_j=\ell-1,\ell$
\begin{itemize}

\item If $\lvert S\cap \lrbr{\ell-1,\ell}\rvert=1$
\begin{equation}\label{eq: xi_Dn6}
\xi_{i_j}=2(-2\ell+2i_{j-1}+2+4S_-(j))
\end{equation}

\item If $\lvert S\cap \lrbr{\ell-1,\ell}\rvert=2$
\begin{equation}\label{eq: xi_Dn7}
\xi_{i_j}=2(\ell-4-i_{m-2})
\end{equation}
\end{itemize}
\end{itemize}
\end{thm}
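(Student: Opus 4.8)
The plan is to follow verbatim the strategy used for types $A_\ell$, $B_\ell$, $C_\ell$ (Theorems \ref{thm: ancoeff} and \ref{thm: xiCnall}): the coefficient $\Gamma_{i_k}$ has already been made explicit in \eqref{eq: gammadn}, so by \eqref{eq: xiCoefficientsDnPre} the only thing left is to compute $\tau_{i_k}=(A\delta^c)_{i_k}$ in closed form, where $\delta^c=\sum_{\alpha\in\Delta_+^c\setminus\spann\lrbr{\gamma_i\vert i\in S^c}}\alpha$. As before I would write $\delta^c_{i_j-1}=\delta^c_{i_j}-s_1+t_1$ and $\delta^c_{i_j+1}=\delta^c_{i_j}-s_2+t_2$, so that for an interior node $\tau_{i_j}=s_1-t_1+s_2-t_2$, where $s_1,t_1$ (resp.\ $s_2,t_2$) count the compact roots in $\Delta_+^c\setminus\spann\lrbr{\gamma_i\vert i\in S^c}$ whose coefficient increases (resp.\ decreases) by one unit when passing from $\gamma_{i_j-1}$ to $\gamma_{i_j}$ (resp.\ from $\gamma_{i_j+1}$ to $\gamma_{i_j}$). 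For the branch node $\ell-2$, where $\tau_{\ell-2}=-\delta^c_{\ell-3}+2\delta^c_{\ell-2}-\delta^c_{\ell-1}-\delta^c_{\ell}$, and for the fork tips $\ell-1,\ell$, where $\tau_{i_j}=-\delta^c_{\ell-2}+2\delta^c_{\ell-1}$, I would instead split $\delta^c$ into the four/five families of positive roots listed in \eqref{eq: compactRootsDn} and track the relevant coefficients of each family separately, exactly as the auxiliary sums $L,M,N$ were used in the $\ell-1$ fork of the $C_\ell$ proof.

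Concretely, the steps are: (i) for each node $i_j$ and each root family in \eqref{eq: compactRootsDn}, isolate the roots exhibiting the coefficient jump of interest and impose the parity condition $\lvert S\cap\{\cdots\}\rvert\in 2\ZZ$ characterising compactness; the surviving roots are unbroken strings whose endpoints run through consecutive elements of $S$, so their number collapses to the alternating sums $S_1(j),S_2(j),T_1(j),T_2(j)$ (plus small Kronecker-delta corrections when a string is forced to reach $\ell-2$ or to carry $\gamma_{\ell-1}+\gamma_\ell$); (ii) assemble $\tau_{i_j}$ from these counts — this is where the dependence on $\lvert S\cap\lrbr{\ell-1,\ell}\rvert$ enters, since painting zero, one, or two of the fork tips shifts every parity condition involving $\ell-1$ or $\ell$ and hence flips which strings are compact; (iii) substitute $\tau_{i_j}$ and the value of $\Gamma_{i_j}$ from \eqref{eq: gammadn} into \eqref{eq: xiCoefficientsDnPre} and simplify using $S_+(j)=S_1(j)-T_2(j)$, $S_-(j)=S_2(j)-T_1(j)$ together with the telescoping identities for the boundary Kronecker deltas (of the form $-\delta_{\ell+1,\cdot}+\delta_{\ell+1,\cdot}=(-1)^{j+m}$) already exploited in the $B_\ell$ and $C_\ell$ proofs. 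Running this through each case produces \eqref{eq: xi_Dn1}--\eqref{eq: xi_Dn7}.

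The case split itself is dictated by the node position: $i_j\notin\lrbr{\ell-2,\ell-1,\ell}$ (with subcases $j\neq m$ and $\lvert S\cap\lrbr{\ell-1,\ell}\rvert\in\{0,2\}$, $j\neq m$ and $\lvert S\cap\lrbr{\ell-1,\ell}\rvert=1$, and $j=m$), then $i_j=\ell-2$, then $i_j\in\lrbr{\ell-1,\ell}$. The first interior subcases reduce, after the substitution, to computations essentially identical to the $B_\ell$ ones; the terminal-node subcases ($j=m$) reduce to short string counts as in the $C_\ell$ proof. For $i_j=\ell-1,\ell$ one uses the symmetry $\tau_{\ell-1}=\tau_\ell$ of the Dynkin diagram, so only one computation is needed, again splitting on $\lvert S\cap\lrbr{\ell-1,\ell}\rvert\in\{1,2\}$.

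The main obstacle is the combinatorics at the branch node $\gamma_{\ell-2}$. Because $D_\ell$ has a genuine fork, besides ordinary strings one must handle strings carrying $\gamma_{\ell-1}$, strings carrying $\gamma_\ell$, strings carrying both, and the ``doubled'' strings $\sum_{n_1}^{n_2}\gamma_i+\sum_{n_2+1}^{\ell-2}2\gamma_i+\gamma_{\ell-1}+\gamma_\ell$; their compactness parities interact with whether $\ell-1$ and $\ell$ are painted, which is exactly why the statement splits on $\lvert S\cap\lrbr{\ell-1,\ell}\rvert$. Decomposing $\delta^c$ into the contributions of these families and reading off the coefficients at $\ell-3,\ell-2,\ell-1,\ell$ is the delicate bookkeeping, and it is where the correction terms $2\delta_{i_{m-1},\ell-1}\delta_{i_m,\ell}$, $\delta_{\ell-1\notin S}\delta_{\ell\notin S}$ in \eqref{eq: xi_Dn4} and $\delta_{j,m-1}$ in \eqref{eq: xi_Dn2} arise — they come from edge cases in which the last painted node sits adjacent to the fork or in which one tip is unpainted, the same phenomenon already reflected in the correction $-\delta_{\nu\in S}\delta_{\nu'\notin S}\delta_{k=m-1}$ of \eqref{eq: gammadn} (with $\nu,\nu'$ a permutation of $\ell-1,\ell$). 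Once these boundary configurations are separated out, every remaining sum telescopes exactly as in the $A_\ell$, $B_\ell$, $C_\ell$ arguments, so no genuinely new idea is needed beyond a careful and lengthy case analysis.
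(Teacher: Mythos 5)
Your proposal follows essentially the same route as the paper's proof: the same reduction to computing $\tau_{i_j}$ via the counts $s_1,t_1,s_2,t_2$ (collapsing to $S_1,S_2,T_1,T_2$ with boundary Kronecker-delta corrections) for interior nodes, the same family-by-family decomposition of $\delta^c$ at the branch node $\ell-2$ and the fork tips, the same use of the symmetry $\tau_{\ell-1}=\tau_\ell$, and the same identification of where the correction terms $\delta_{j,m-1}$, $2\delta_{i_{m-1},\ell-1}\delta_{i_m,\ell}$ and $\delta_{\ell-1\notin S}\delta_{\ell\notin S}$ originate. The remaining work is exactly the lengthy case-by-case bookkeeping you describe, which is what the paper carries out.
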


\begin{proof}
As for the other proofs assume that $i_0=0$ and $i_{m+1}=\ell+1$. 

\begin{itemize}
\item Case $i_j\notin\lrbr{\ell-2,\ell-1,\ell}$.
 \begin{itemize}
 \item Case $j\neq m$ and $\lvert S\cap \lrbr{\ell-1,\ell}\rvert=0,2$. We will prove that
\begin{equation}\label{eq: dnformula1}
\tau_{i_j}= -i_{j-1}+2 i_j-i_{j+1}+2(S_1(j)-T_2(j))+2(-1)^{m+j},
\end{equation}
that, together with the coefficients \eqref{eq: gammadn} and \eqref{eq: xiCoefficientsDnPre} will prove identity \eqref{eq: xi_Dn1}. Recall that 
\begin{equation}
\tau_{i_j}=-\delta_{i_j-1}^c+2\delta^c_{i_j}-\delta_{i_j+1}^c=s_1-t_1+s_2-t_2.
\end{equation}
Similarly as before, by looking at the compact roots listed in \eqref{eq: compactRootsDn} one has
\begin{equation}
\begin{split}
s_1=& \sum_{k=1}^{\lceil\frac{m -j}{2} \rceil}i_{j+2k}-i_{j+2k-1}+\delta_{\ell+1,i_{j+2\lceil\frac{m -j}{2} \rceil}}+(i_j-i_{j-1}-1)+ \\ &+\sum_{k=1}^{\lfloor\frac{j+1}{2} \rfloor-1} i_{j-2k}-i_{j-2k-1} + \sum_{k=1}^{\lceil\frac{m -j}{2} \rceil}i_{j+2k}-i_{j+2k-1}-3\delta_{\ell+1,i_{j+2\lceil\frac{m -j}{2} \rceil}}\\
=& i_j-i_{j-1}-1+2S_1(j)+T_1(j)+\delta_{\ell+1,i_{j+2\lceil\frac{m -j}{2} \rceil}}-2\delta_{\ell+1,i_{j+2\lceil\frac{m -j}{2} \rceil}}.
\end{split}
\end{equation} 
The coefficient $\delta_{\ell+1,i_{j+2\lceil\frac{m -j}{2} \rceil}}$ appears since the root $\sum_{l=i_j}^{\ell-2}\gamma_l+\gamma_{\ell-1}+\gamma_{\ell}$ gives contribution only if $|\lrbr{i_{j+1},\ldots,i_m}|$ is even. Similarly, $3\delta_{\ell+1,i_{j+2\lceil\frac{m -j-1}{2} \rceil+1}}$ appears to avoid overcounting the roots of type $\sum_{l=i_j}^{n_2}\gamma_l+\sum_{l=n_2+1}^{\ell-2}2\gamma_l+\gamma_{\ell-1}+\gamma_{\ell}$ with $|S\cap\lrbr{i_j,\ldots,\ell-2}|\in 2\ZZ$.

For $s_2$ and $t_1$ we have $s_2=S_2(j)$, $t_2=T_2(j)$. Finally, for $t_2$, similarly as for $s_1$ we have
\begin{equation}
\begin{split}
t_2= & \sum_{k=1}^{\lceil\frac{m -j-1}{2} \rceil}i_{j+2k+1}-i_{j+2k}+\delta_{\ell+1,i_{j+2\lceil\frac{m -j-1}{2} \rceil+1}}+(i_{j+1}-i_j-1)+ \\ & +\sum_{k=1}^{\lceil\frac{m -j-1}{2} \rceil}i_{j+2k+1}-i_{j+2k} -3\delta_{\ell+1,i_{j+2\lceil\frac{m -j-1}{2} \rceil+1}} + \sum_{k=1}^{\lfloor\frac{j}{2} \rfloor} i_{j-2k+1}-i_{j-2k}\\
=&  i_{j+1}-i_j-1 +2T_2(j)+S_2(j)-2\delta_{\ell+1,i_{j+2\lceil\frac{m -j-1}{2} \rceil+1}}.
\end{split}
\end{equation}
Hence
\begin{equation}
\begin{split}
s_1-t_1+s_2-t_2=& -i_{j-1}+2i_j-i_{j-1} +2(S_1(j)-T_2(j))+2(-1)^{m+j},
\end{split}
\end{equation}
This proves \eqref{eq: dnformula1}.

\item Case $j\neq m$ and $\lvert S\cap \lrbr{\ell-1,\ell}\rvert=1$. We have to prove that
\begin{equation}\label{eq: dnformula2}
\tau_{i_j}= i_{j-1}-2 i_j+i_{j+1}+2(S_2(j)-T_1(j)) +\delta_{j,m-1}.
\end{equation}
This, together with the coefficients \eqref{eq: gammadn} and \eqref{eq: xiCoefficientsDnPre} will prove identity \eqref{eq: xi_Dn2}. Without loss of generality we can assume $\ell-1\in S$, since the same result holds for $\ell\in S$, by symmetry. We need to compute
 \begin{equation}
\tau_{i_j}=-\delta_{i_j-1}^c+2\delta^c_{i_j}-\delta_{i_j+1}^c=s_1-t_1+s_2-t_2. 
\end{equation}
The computations for $s_1$, $s_2$ and $t_1$ are similar as above, hence
\begin{equation}
\begin{split}
s_1=& \sum_{k=1}^{\lceil\frac{m -j}{2} \rceil}i_{j+2k}-i_{j+2k-1}+ \delta_{\ell+1,i_{j+2\lceil \frac{m-j-1}{2}\rceil+1}}  +\sum_{k=1}^{\lceil\frac{m -j-1}{2} \rceil}i_{j+2k+1}-i_{j+2k}+  \\
& -\delta_{\ell+1,i_{j+2\lceil \frac{m-j}{2}\rceil }} -2\delta_{\ell+1,i_{j+2\lceil \frac{m-j-1}{2}\rceil +1}} + (i_{j+1}-i_j)+ \sum_{k=1}^{\lfloor\frac{j}{2} \rfloor} i_{j-2k+1}-i_{j-2k} \\
=& i_{j+1}-i_j+S_1(j)+T_2(j)+S_2(j)-\delta_{\ell+1,i_{j+2\lceil \frac{m-j}{2}\rceil} } -\delta_{\ell+1,i_{j+2\lceil \frac{m-j-1}{2}\rceil+1}},
\end{split}
\end{equation}
and $s_2=S_2(j)$, $t_1=T_1(j)$. For $t_2$ we need to count the compact roots in $\Delta^c_+\setminus\spann\lrbr{\gamma_i \vert i\in S^c}$ such that $\alpha_{i_j+1}-\alpha_{i_j}=1$. In order to do that we can proceed as for the other cases, obtaining
\begin{equation}
\begin{split}
t_2=& \sum_{k=1}^{\lceil\frac{m -j-1}{2} \rceil}i_{j+2k+1}-i_{j+2k}+\delta_{\ell+1,i_{j+2\lceil \frac{m-j}{2}\rceil}}+\sum_{k=1}^{\lceil\frac{m -j}{2} \rceil}i_{j+2k}-i_{j+2k-1}+ \\ &-\delta_{j,m-1} -2\delta_{\ell+1,i_{j+2\lceil \frac{m-j}{2}\rceil}}-\delta_{\ell+1,i_{j+2\lceil \frac{m-j-1}{2}\rceil+1}}+(i_j-i_{j-1})+ \\ &+\sum_{k=1}^{\lfloor\frac{j+1}{2} \rfloor-1} i_{j-2k}-i_{j-2k-1}\\
=& i_j-i_{j-1}+T_2(j)+S_1(j)+T_1(j) -\delta_{\ell+1,i_{j+2\lceil \frac{m-j}{2}\rceil}} +\\ &-\delta_{\ell+1,i_{j+2\lceil \frac{m-j-1}{2}\rceil+1}}-\delta_{j,m-1}.
\end{split}
\end{equation}
In this case we see a $\delta_{j,m-1}$ as last summand. It comes from the fact that, when $j=m-2$, the contribute from $T_2(j)+\delta_{\ell+1,i_{j+2\lceil \frac{m-j}{2}\rceil}}$ should be $0$, since $\gamma_{m-2}+\ldots+\gamma_{\ell-2}+\gamma_{\ell}\notin \Delta^c_+\setminus\spann\lrbr{\gamma_i \vert i\in S^c}$. However, this contribute is $1$, since $\delta_{\ell+1,i_{j+2\lceil \frac{m-j}{2}\rceil}}=1$, so we need to subtract $1$ in order to get the correct number of roots. Summing all the contributes we get
\begin{equation}
s_1-t_1+s_2-t_2=i_{j-1}-2i_j+i_{j+1}+2(S_2(j)-T_1(j))+\delta_{j,m-1}.
\end{equation}
This proves formula \eqref{eq: dnformula2}.

\item Case $j=m$. We prove
\begin{equation}\label{eq: dnformula3}
\tau_{i_m}= i_m-i_{m-1}-1,
\end{equation}
that implies identity \eqref{eq: xi_Dn2}, together with the coefficients \eqref{eq: gammadn} and \eqref{eq: xiCoefficientsDnPre}. In this case we have 
 \begin{equation}
\tau_{i_j}=-\delta_{i_j-1}^c+2\delta^c_{i_j}-\delta_{i_j+1}^c=s_1-t_1+s_2-t_2,
\end{equation}
and the computations are as before. Hence
\begin{equation}
s_1=(i_j-i_{j-1}-1)+\sum_{k=1}^{\lfloor\frac{j+1}{2} \rfloor-1} i_{j-2k}-i_{j-2k-1}=i_j-i_{j-1}-1+T_1(j),
\end{equation}
$s_2=S_2(j)$, $t_1=T_1(j)$ and 
\begin{equation}
t_2=\sum_{k=1}^{\lfloor\frac{j}{2} \rfloor} i_{j-2k+1}-i_{j-2k}=S_2(j).
\end{equation}
Thus
\begin{equation}
s_1-t_1+s_2-t_2=i_m-i_{m-1}-1,
\end{equation}
and this proves \eqref{eq: dnformula3}.
\end{itemize}

\item Case $i_j=\ell-2$. In this case we need to compute 
\begin{equation}
\tau_{i_j}=-\delta^c_{\ell-3}+2\delta^c_{\ell-2}-\delta^c_{\ell-1}-\delta^c_{\ell}.
\end{equation}
There are two cases to consider.

\begin{itemize}
\item Case $\lvert S\cap \lrbr{\ell-1,\ell}\rvert=0,2$. We will prove that
\begin{equation}\label{eq: dnformula4}
\tau_{\ell-2}=\ell-3-i_{j-1}+2\delta_{i_{m-1},\ell-1}\delta_{i_m,\ell},
\end{equation}
that implies identity \eqref{eq: xi_Dn4}, together with the coefficients \eqref{eq: gammadn} and \eqref{eq: xiCoefficientsDnPre}. This point is a little technical, so we will illustrate most of the computations, which will also be used in the subsequent points. The compact roots in $\Delta^c_+\setminus\spann\lrbr{\gamma_i \vert i\in S^c}$ such that $\alpha_{\ell-2}\neq 0$ are
\begin{align}
&\sum_{i=n_1}^{\ell-2}\gamma_i,\quad |S\cap\lrbr{n_1,\ldots,\ell-2}|\in 2\ZZ \\
&\sum_{i=n_1}^{\ell-2}\gamma_i+\nu,\ \nu=\gamma_{\ell-1},\gamma_{\ell},\ |S\cap\lrbr{n_1,\ldots,\ell-2}|\in 1+2\ZZ, \text{if}\ \gamma_{\ell-1},\gamma_{\ell}\in S \\
&\sum_{i=n_1}^{\ell-2}\gamma_i+\nu,\ \nu=\gamma_{\ell-1},\gamma_{\ell},\quad |S\cap\lrbr{n_1,\ldots,\ell-2}|\in 2\ZZ, \text{if}\ \gamma_{\ell-1},\gamma_{\ell}\notin S \\
&\sum_{i=n_1}^{\ell-2}\gamma_i+\gamma_{\ell-1}+\gamma_{\ell},\quad |S\cap\lrbr{n_1,\ldots,\ell-2}|\in 2\ZZ \\
&\sum_{i=n_1}^{n_2}\gamma_i+\sum_{i=n_2+1}^{\ell-2}2\gamma_i+\gamma_{\ell-1}+\gamma_{\ell},\quad |S\cap\lrbr{n_1,\ldots,n_2}|\in 2\ZZ, 
\end{align}
with coefficients $\alpha_{\ell-2}=1,1,1,1,2$ respectively. On the other hand, the compact roots in $\Delta^c_+\setminus\spann\lrbr{\gamma_i \vert i\in S^c}$ such that $\alpha_{\ell-1}\neq 0$ or $\alpha_{\ell}\neq 0$ are
\begin{align}
&\sum_{i=n_1}^{\ell-2}\gamma_i+\nu,\ \nu=\gamma_{\ell-1},\gamma_{\ell},\ |S\cap\lrbr{n_1,\ldots,\ell-2}|\in 1+2\ZZ, \text{if}\ \gamma_{\ell-1},\gamma_{\ell}\in S \\
&\sum_{i=n_1}^{\ell-2}\gamma_i+\nu,\ \nu=\gamma_{\ell-1},\gamma_{\ell},\quad |S\cap\lrbr{n_1,\ldots,\ell-2}|\in 2\ZZ, \text{if}\ \gamma_{\ell-1},\gamma_{\ell}\notin S \\
&\sum_{i=n_1}^{\ell-2}\gamma_i+\gamma_{\ell-1}+\gamma_{\ell},\quad |S\cap\lrbr{n_1,\ldots,\ell-2}|\in 2\ZZ \\
&\sum_{i=n_1}^{n_2}\gamma_i+\sum_{i=n_2+1}^{\ell-2}2\gamma_i+\gamma_{\ell-1}+\gamma_{\ell},\quad |S\cap\lrbr{n_1,\ldots,n_2}|\in 2\ZZ, 
\end{align}
with coefficients $0$ or $1$. Moreover, the compact roots in $\Delta^c_+\setminus\spann\lrbr{\gamma_i \vert i\in S^c}$ such that $\alpha_{\ell-3}\neq 0$ are 
\begin{align}
&\sum_{i=n_1}^{\ell-3}\gamma_i,\quad |S\cap\lrbr{n_1,\ldots,\ell-3}|\in 2\ZZ,\ n_1\leq \ell-3 \\
&\sum_{i=n_1}^{\ell-2}\gamma_i,\quad |S\cap\lrbr{n_1,\ldots,\ell-2}|\in 2\ZZ,\ n_1\leq \ell-3 \\
&\sum_{i=n_1}^{\ell-2}\gamma_i+\nu,\ \nu=\gamma_{\ell-1},\gamma_{\ell},\quad |S\cap\lrbr{n_1,\ldots,\ell-2}|\in 1+2\ZZ, \\ & \phantom{spazio}\text{if}\ \gamma_{\ell-1},\gamma_{\ell}\in S,\ n_1\leq \ell-3 \\
&\sum_{i=n_1}^{\ell-2}\gamma_i+\nu,\ \nu=\gamma_{\ell-1},\gamma_{\ell},\quad |S\cap\lrbr{n_1,\ldots,\ell-2}|\in 2\ZZ,\\ & \phantom{spazio}\text{if}\ \gamma_{\ell-1},\gamma_{\ell}\notin S,\ n_1\leq \ell-3 \\
&\sum_{i=n_1}^{\ell-2}\gamma_i+\gamma_{\ell-1}+\gamma_{\ell},\quad |S\cap\lrbr{n_1,\ldots,\ell-2}|\in 2\ZZ,\ n_1\leq \ell-3 \\
&\sum_{i=n_1}^{n_2}\gamma_i+\sum_{i=n_2+1}^{\ell-2}2\gamma_i+\gamma_{\ell-1}+\gamma_{\ell},\quad |S\cap\lrbr{n_1,\ldots,n_2}|\in 2\ZZ,\ n_2\leq \ell-4 \\
&\sum_{i=n_1}^{\ell-3}\gamma_i+2\gamma_{\ell-2}+\gamma_{\ell-1}+\gamma_{\ell},\quad |S\cap\lrbr{n_1,\ldots,\ell-3}|\in 2\ZZ,\ n_1\leq \ell-3.
\end{align}
Not all the roots contribute to the summation. In particular, the ones that do are
\begin{itemize}
\item $\sum_{i=n_1}^{\ell-3}\gamma_i,\ |S\cap\lrbr{n_1,\ldots,\ell-3}|\in 2\ZZ,\ n_1\leq \ell-3$. These contribute to $-\delta^c_{\ell-3}+2\delta^c_{\ell-2}-\delta^c_{\ell-1}-\delta^c_{\ell}$ only for the $-\delta^c_{\ell-3}$ part, since the other $\delta^c_{\ell-2},\delta^c_{\ell-1},\delta^c_{\ell}$ give contribution $0$. So the given contribute is $-\sum_{k=1}^{\lfloor\frac{j+1}{2} \rfloor-1} i_{j-2k}-i_{j-2k-1}$.
\item $\sum_{i=n_1}^{\ell-2}\gamma_i+\gamma_{\ell-1}+\gamma_{\ell},\ |S\cap\lrbr{n_1,\ldots,\ell-2}|\in 2\ZZ,\ n_1\leq \ell-3$. These contribute to $-\delta^c_{\ell-3}+2\delta^c_{\ell-2}-\delta^c_{\ell-1}-\delta^c_{\ell}$ only for the $-\delta^c_{\ell-3}$ part, since $\delta^c_{\ell-2},\delta^c_{\ell-1},\delta^c_{\ell}$ factors cancel out. So the given contribute is $-\sum_{k=1}^{\lfloor\frac{j}{2} \rfloor} i_{j-2k+1}-i_{j-2k}$.
\item $\sum_{i=n_1}^{\ell-2}\gamma_i,\ |S\cap\lrbr{n_1,\ldots,\ell-2}|\in 2\ZZ,\ n_1\leq \ell-3$. These contribute to $-\delta^c_{\ell-3}+2\delta^c_{\ell-2}-\delta^c_{\ell-1}-\delta^c_{\ell}$ only for the $-\delta^c_{\ell-3}+2\delta^c_{\ell-2}$ part, since $\delta^c_{\ell-1},\delta^c_{\ell}$ give contribution $0$. So the given contribute is $\sum_{k=1}^{\lfloor\frac{j}{2} \rfloor} i_{j-2k+1}-i_{j-2k}$.
\item $\sum_{i=n_1}^{n_2}\gamma_i+\sum_{i=n_2+1}^{\ell-2}2\gamma_i+\gamma_{\ell-1}+\gamma_{\ell},\ |S\cap\lrbr{n_1,\ldots,n_2}|\in 2\ZZ,\ n_2\leq \ell-3$. These contribute to the whole coefficient $-\delta^c_{\ell-3}+2\delta^c_{\ell-2}-\delta^c_{\ell-1}-\delta^c_{\ell}$, by $+(\ell-2-i_{j-1}-1)+\sum_{k=1}^{\lfloor\frac{j+1}{2} \rfloor-1} i_{j-2k}-i_{j-2k-1}$.
\item The roots $\gamma_{\ell-2}+\gamma_{\ell-1}$, $\gamma_{\ell-2}+\gamma_{\ell}$, if $\gamma_{\ell-1},\gamma_{\ell}\in S$. These contribute to $-\delta^c_{\ell-3}+2\delta^c_{\ell-2}-\delta^c_{\ell-1}-\delta^c_{\ell}$ only for the $2\delta^c_{\ell-2}-\delta^c_{\ell-1}-\delta^c_{\ell}$ part, with a total contribute of $2$.
\end{itemize}
So 
\begin{equation}
\tau_{\ell-2}= \ell-3-i_{j-1}+2\delta_{i_{m-1},\ell-1}\delta_{i_m,\ell},
\end{equation}
amd this proves equality \eqref{eq: dnformula4}.

\item Case $\lvert S\cap \lrbr{\ell-1,\ell}\rvert=1$. We prove that
\begin{equation}\label{eq: dnformula5}
\tau_{\ell-2}= -i_j+i_{j-1}+2+2(S_2(j)-T_1(j))
\end{equation}
that implies identity \eqref{eq: xi_Dn5}, together with the coefficients \eqref{eq: gammadn} and \eqref{eq: xiCoefficientsDnPre}. Without loss of generality we can assume that $\gamma_m=\gamma_{\ell-1}$, and the proof in this case is similar to the one of the previous point. In particular, the roots that contribute in the summation are
\begin{itemize}
\item $\sum_{i=n_1}^{\ell-3}\gamma_i,\ |S\cap\lrbr{n_1,\ldots,\ell-3}|\in 2\ZZ,\ n_1\leq \ell-3$, which contribute by $-\sum_{k=1}^{\lfloor\frac{j+1}{2} \rfloor-1} i_{j-2k}-i_{j-2k-1}$.
\item $\sum_{i=n_1}^{\ell-2}\gamma_i,\ |S\cap\lrbr{n_1,\ldots,\ell-2}|\in 2\ZZ,\ n_1\leq \ell-3$, which contribute by $\sum_{k=1}^{\lfloor\frac{j}{2} \rfloor} i_{j-2k+1}-i_{j-2k}$.
\item $\gamma_{\ell-2}+\gamma_{\ell-1}$, which contributes by a factor $1$.
\item $\sum_{i=n_1}^{\ell-2}\gamma_i+\gamma_{\ell-1}+\gamma_{\ell},\ |S\cap\lrbr{n_1,\ldots,\ell-2}|\in 1+2\ZZ,\ n_1\leq \ell-3$, which contribute by $-(\ell-1-i_{j-1}-1)-\sum_{k=1}^{\lfloor\frac{j+1}{2} \rfloor-1} i_{j-2k}-i_{j-2k-1}$.
\item $\sum_{i=n_1}^{\ell-3}\gamma_i+2\gamma_{\ell-2}+\gamma_{\ell-1}+\gamma_{\ell},\ |S\cap\lrbr{n_1,\ldots,\ell-3}|\in 1+2\ZZ,\ n_1\leq \ell-3$, which contribute by $\sum_{k=1}^{\lfloor\frac{j}{2} \rfloor} i_{j-2k+1}-i_{j-2k}$.
\end{itemize}
By summing all the contributes we get
\begin{equation}
\begin{split}
\tau_{\ell-1}= -\ell+3+i_{j-1}+2(S_2(j)-T_1(j)),
\end{split}
\end{equation}
which proves \eqref{eq: dnformula5}.
\end{itemize}

\item Case $i_j=\ell-1,\ell$. Without loss of generality we can assume $i_j=\ell-1$. For this case we have to compute
\begin{equation}
\tau_{i_j}=-\delta^c_{\ell-2}+2\delta^c_{\ell-1}.
\end{equation}

\begin{itemize}
\item Case $|S\cap \lrbr{\gamma_{\ell-1},\gamma_{\ell}}|=1$. We compute
\begin{equation}\label{eq: dnformula6}
\tau_{i_j}=2(S_2(j) -T_1(j)),
\end{equation}
that, together with the coefficients \eqref{eq: gammadn} and \eqref{eq: xiCoefficientsDnPre}, implies identity \eqref{eq: xi_Dn6}. Similarly to the previous point, the roots that contribute in the summation are 
\begin{itemize}
\item $\sum_{i=n_1}^{\ell-2}\gamma_i,\ |S\cap\lrbr{n_1,\ldots,\ell-2}|\in 2\ZZ$, which contribute by $-\sum_{k=1}^{\lfloor\frac{j+1}{2} \rfloor-1} i_{j-2k}-i_{j-2k-1}$.
\item $\sum_{i=n_1}^{\ell-2}\gamma_i+\gamma_{\ell-1},\ |S\cap\lrbr{n_1,\ldots,\ell-2}|\in 1+2\ZZ$, which contribute by $\sum_{k=1}^{\lfloor\frac{j}{2} \rfloor} i_{j-2k+1}-i_{j-2k}$.
\item $\sum_{i=n_1}^{\ell-2}\gamma_i+\gamma_{\ell},\ |S\cap\lrbr{n_1,\ldots,\ell-2}|\in 2\ZZ$, which contribute by $-\sum_{k=1}^{\lfloor\frac{j+1}{2} \rfloor-1} i_{j-2k}-i_{j-2k-1}$.
\item $\sum_{i=n_1}^{\ell-2}\gamma_i+\gamma_{\ell-1}+\gamma_{\ell},\ |S\cap\lrbr{n_1,\ldots,\ell-2}|\in 1+2\ZZ$ which contribute by $\sum_{k=1}^{\lfloor\frac{j}{2} \rfloor} i_{j-2k+1}-i_{j-2k}$.
\end{itemize}
Thus one gets
\begin{equation}
\tau_{i_j}=2(S_2(j)-T_1(j)),
\end{equation}
which proves \eqref{eq: dnformula6}

\item Case $|S\cap \lrbr{\gamma_{\ell-1},\gamma_{\ell}}|=2$. We prove
\begin{equation}\label{eq: dnformula7}
\tau_{\ell-1}=\ell-2-i_{m-3},
\end{equation}
that, together with the coefficients \eqref{eq: gammadn} and \eqref{eq: xiCoefficientsDnPre}, implies identity \eqref{eq: xi_Dn7}. Similarly as above one can compute the roots that contribute to $\tau_{i_j}$, except that one has to take into account that the compactness of the roots changes. Then
\begin{equation}
\tau_{\ell-1}=\ell-2-i_{m-3},
\end{equation}
which proves identity \eqref{eq: dnformula7} and concludes the proof.
\end{itemize}
\end{itemize}
\end{proof}

\paragraph{Aknowledgments} The author is grateful to Alberto Della Vedova for discussions, ideas and thoughts in the initial stage of the work, and to G. Bruno De Luca for many corrections, suggestions and for the support in writing this paper.

\bibliographystyle{siam}
\bibliography{biblio}
\vspace{0.5cm}
Alice Gatti\\
\textit{Applied Mathematics and Computational Research Division, Lawrence Berkeley National Laboratory, 1 Cyclotron Road, Berkeley, CA 94720}.\\ 
E-mail: \texttt{agatti@lbl.gov}
\end{document}